\theoremstyle{plain} 
\newtheorem{lemma}[equation]{Lemma} 
\newtheorem{proposition}[equation]{Proposition} 
\newtheorem{theorem}[equation]{Theorem} 
\newtheorem{conjecture}[equation]{Conjecture}
\newtheorem{SIZE}[equation]{Size Lemma} 
\newtheorem{dense}[equation]{Density Lemma} 
\newtheorem{tree}[equation]{Tree Lemma} 
\newtheorem{carleson}[equation]{Carleson Theorem}
\theoremstyle{definition}
\newtheorem{definition}[equation]{Definition} 
\theoremstyle{remark}
\newtheorem{remark}[equation]{Remark}
\numberwithin{equation}{section}
\def\norm#1.#2.{\|#1\|_{#2}}
\def\Norm#1.#2.{\bigl\|#1\bigr\|_{#2}}
\def\NOrm#1.#2.{\Bigl\|#1\Bigr\|_{#2}}
\def\NORm#1.#2.{\biggl\|#1\biggr\|_{#2}}
\def\NORM#1.#2.{\Biggl\|#1\Biggr\|_{#2}}
\def\ip#1,#2,{\langle #1,#2\rangle}
\def\Ip#1,#2,{\bigl\langle#1,#2\bigr\rangle}
\def\IP#1,#2,{\Bigl\langle#1,#2\Bigr\rangle}
\def\>{\rangle}
\def\<{\langle}
\def\density{\text{dense}}
\def\P{\mathbf P}
\def\T{\mathbf T}
\def\I{\mathbf I}
\def\Q{\mathbf Q}
\title {On the Convergence of Lacunary Walsh-Fourier Series}
\subjclass[2010]{Primary: 42A20  Secondary:  42B20, 42B25, 42B35}
\author{Yen Do}
\address{ School of Mathematics, Georgia Institute of Technology, Atlanta GA 30332, USA}
\email {yendo@math.gatech.edu}
\thanks{Research supported by a Mathematical Sciences Institute Postdoctoral Fellowship, grant NSF-DMS-0635607002.}
\author{Michael Lacey}  
\address{ School of Mathematics, Georgia Institute of Technology, Atlanta GA 30332, USA}
\email {lacey@math.gatech.edu}
\thanks{Research supported in part by grant NSF-DMS 0968499.}
\begin{document}
\begin{abstract}
We show that for function $ f$  on $ [0,1]$, with $ \int |  f| (\log\log_+ |f| )(\log\log \log_+ |f| ) \; dx < \infty $, and lacunary subsequence of integers $ \{n_j\}$, it holds that $ S _{n_j} f \longrightarrow f $ a.e., where $ S _{m}f$ is the $ m$-th 
Walsh-Fourier partial sum of $ f$. 
According to a result of Konyagin, the sharp integrability condition would not have the triple-log term in it. 
The method of proof uses four ingredients, (1)  analysis on the Walsh Phase Plane, (2) the new multi-frequency Calder\'on-Zygmund Decomposition of Nazarov-Oberlin-Thiele,  (3) a classical inequality of Zygmund, giving an improvement in the Hausdorff-Young  inequality for lacunary subsequences of integers,  and (4) the extrapolation method of Carro-Mart\'in, which generalizes the work of Antonov and Arias-de-Reyna.  
\end{abstract}

\maketitle

\section{Introduction} 

Let $ f $ be an integrable function on the torus $ \mathbb T $, which will be associated with the interval $ [0,1] $ in this paper. 
We consider the Walsh system of functions on $ [0,1]$ given by $ W _{0} (x) =1$, and for $ n \ge 1$, we write 
$ n= \sum_{k=0} ^{r} \varepsilon _k 2 ^{k}$ in binary digits, and define 
\begin{equation*}
W _{n} (x) := \prod _{k=0} ^{r} (\textup{sign} \sin (2 ^{k+1} \pi x)) ^{\varepsilon _{k}} \,. 
\end{equation*}
(We are reserving a lower-case $ w$ for Walsh wave packets defined in \S\ref{s.phasePlane}.)  
The Walsh system is a complete orthonormal system for $ L ^2 (0,1)$, and each $ f\in L ^1$ has the Walsh-Fourier representation 
\begin{equation*}
f \sim \sum_{k\ge 0} \widehat   f (k) W _{k}\,, \qquad \widehat  f (k) := \int _{[0,1]} f (x) W _{k} (x)\; dx  
\end{equation*}
The partial sums of the series above, $ S _{n} f := \sum_{k=0} ^{n } \widehat   f (k) W _{k} $ are the concern of this paper, 
 strongly motivated by the close analogy between the Walsh-Fourier series, and Fourier series.  

The question addressed here, and brought to our attention by the informative article of Konyagin \cite{MR2275651}, is this: 
If a given sequence of integers $ n_j$ is sparse enough, can one assert the pointwise convergence of the Walsh-Fourier sums $ S _{n_j} f$ for a broader class of functions than one has for the full sequence of partial sums?  Indeed, this is the case, as we will show in Theorem~\ref{maintheorem} below; our result comes close to  resolving  Konyagin's  Conjecture~\ref{j.lC} below.  

Let us recall the essential facts.  It is well known that integrability of $ f $ is not enough for this pointwise convergence via a counter example of Kolmogorov \cite{kolmogorov}, who in fact constructed  an integrable function whose Fourier series diverges almost everywhere. Carleson \cite{MR0199631} showed in his seminal paper the almost everywhere convergence of the Fourier series for $ f\in L^2 $, and Hunt \cite{MR0238019} observed extension of Carleson's proof to $ L^p $, $ 1<p<\infty $. These results were reproved by Fefferman \cite{MR0340926}, and the approach of  Lacey-Thiele \cite{MR1783613} is modified in this  paper.  

A result of Konyagin \cite{MR2200228}, extending the work of many, including \cites{MR0096068,MR678901,MR652607}, shows the following. 

\begin{theorem}\label{t.k} Let $ \phi  $ be a increasing convex function such that $ \phi (t) = o (t \log\log t)$ as $ t\to \infty $.  
Then, for any increasing sequence of integers $n_j$ there is a $ f \in \phi (L)$ such that $ \sup _{j} | S _{n_j}f (x)| = \infty $ 
for all $ x\in [0,1]$. 
\end{theorem}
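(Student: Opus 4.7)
The plan is to adapt the Kolmogorov--Stein divergence construction to the Walsh--Fourier setting, exploiting the gap given by $\phi(t)=o(t\log\log t)$. The counterexample $f$ is allowed to depend on the given sequence $\{n_j\}$, so the task is to build, for the worst possible choice of $\{n_j\}$, a function in $\phi(L)$ whose partial sums along this sequence blow up everywhere on $[0,1]$.

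First I would construct, for each large parameter $N$, a Kolmogorov-type building block $g_N$ satisfying $\int \phi(|g_N|)\,dx \lesssim 1$, yet for which
\[
\sup_{j \le J_N} |S_{n_j} g_N(x)| \gtrsim \log\log N \quad \text{on a set } E_N \subset [0,1] \text{ with } |E_N| \ge 1/2.
\]
The natural approach is to write each Walsh--Dirichlet kernel $D_{n_j}$ using the binary decomposition $n_j = \sum_k \varepsilon_k 2^k$, so that $D_{n_j}$ splits into dyadic martingale pieces (conditional expectations) and modulations by Walsh characters. Choosing $g_N$ of the form $c_N \sum_k W_{m_k}\mathbf{1}_{I_k}$, with frequencies $m_k$ aligned with the most active binary digits of the $n_j$'s and intervals $I_k$ adapted to the corresponding dyadic scales, the modulation terms should interfere constructively at many points and produce the claimed $\log\log N$ lower bound.

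With the packets $g_N$ in hand, a standard gliding-hump argument would take
\[
f := \sum_k \varepsilon_k\, g_{N_k}(\cdot - t_k),
\]
for rapidly decreasing $\varepsilon_k \to 0$, fast-growing $N_k$, and translates $t_k$ chosen so that the sets $E_{N_k} + t_k$ cover $[0,1]$ infinitely often. Convexity of $\phi$ together with the hypothesis $\phi(t)=o(t\log\log t)$ guarantees $\int \phi(|f|)\,dx < \infty$, while the packet lower bound propagates to give $\sup_j |S_{n_j} f(x)| = \infty$ for every $x$; the quantitative choice of $\varepsilon_k$ is dictated by the rate at which $\phi(t)/(t\log\log t) \to 0$.

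The main obstacle is the first step. Unlike in the trigonometric setting, one has to deal with an \emph{arbitrary} increasing sequence $\{n_j\}$ and, with no freedom to move the frequencies, arrange $\log\log N$-sized constructive interference across whatever binary patterns the $n_j$ happen to exhibit. This is a delicate combinatorial problem whose success relies precisely on the $\log\log$ gap in $\phi$; weakening that gap to $\log\log\log$ or less would destroy the balance, which is exactly what Theorem~\ref{maintheorem} exploits on the positive side.
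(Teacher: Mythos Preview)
The paper does not prove Theorem~\ref{t.k}. It is quoted, without proof, as a result of Konyagin~\cite{MR2200228} (building on~\cites{MR0096068,MR678901,MR652607}) and serves only as background motivation for Conjecture~\ref{j.lC} and Theorem~\ref{maintheorem}. So there is no argument in the paper to compare your proposal against.

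As to the proposal itself: what you have written is an outline, not a proof. The entire difficulty of the theorem lies in the first step, and you acknowledge as much. Producing, for an \emph{arbitrary} increasing sequence $\{n_j\}$, a building block $g_N$ with $\int \phi(|g_N|)\lesssim 1$ and $\sup_{j\le J_N}|S_{n_j}g_N|\gtrsim \log\log N$ on a set of measure $\ge 1/2$ is precisely Konyagin's contribution, and your paragraph about aligning Walsh characters with ``the most active binary digits of the $n_j$'s'' does not supply the mechanism. In the trigonometric case the analogous step already required new ideas beyond Kolmogorov--Stein; you would need to either reproduce Konyagin's argument or give a genuinely new construction here.

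Two smaller points. First, in the Walsh setting the relevant group operation is dyadic (bitwise XOR) addition on $[0,1)$, not ordinary translation; writing $g_{N_k}(\cdot - t_k)$ is at best shorthand, and the covering argument for \emph{everywhere} (not merely a.e.) divergence needs to be phrased in terms of that group structure. Second, even granting the building blocks, passing from ``$E_{N_k}\dotplus t_k$ cover $[0,1]$ infinitely often'' to $\sup_j|S_{n_j}f(x)|=\infty$ at every point requires controlling the interaction between the different pieces of $f$ at the indices $n_j$ used for the $k$-th block; this is routine once the parameters are separated sufficiently, but it should be stated.
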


 Konyagin \cite{MR2275651} has conjectured that the previous result is in fact sharp for lacunary subsequences of integers. 
A sequence $ (n_j)_{j\ge 1} $ is called \emph{lacunary} if
$$\inf_{j\ge 1} \frac {n_{j+1}}{n_j} > 1 \ \ .$$
 Below, $\log_+x =  27+\max(0,\log x)$.\footnote{With this definition, you only need to have one $ \log_+$ in the formulas.}

\begin{conjecture}\label{j.lC} Let $ \{n_j \;:\; j\ge 1\} $ be a lacunary sequence of integers. If
$$\int |f(x)| \log \log_+ |f(x)| dx < \infty \ \ ,$$
then $ S_{n_j}f(x) \longrightarrow f(x) $ for almost every $ x \in \mathbb T $. 
\end{conjecture}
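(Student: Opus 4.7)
The plan is to reduce pointwise convergence to a weak-type maximal inequality, then attack that inequality through Walsh phase-plane analysis amplified by the lacunary structure. By the Carro--Mart\'in extrapolation principle (which extends Antonov's and Arias-de-Reyna's arguments), a.e.\ convergence $S_{n_j} f \to f$ for every $f$ with $\int |f|\log\log_+|f|\,dx < \infty$ would follow from two ingredients: (i) a.e.\ convergence on a dense class such as Walsh polynomials, which is immediate; and (ii) a restricted weak-type bound on the maximal operator $M f := \sup_{j} |S_{n_j} f|$ of the form $|\{x : M \mathbf{1}_E(x) > \lambda\}| \lesssim \Phi(\lambda)\, |E|$ with decay $\Phi(\lambda) \sim \lambda^{-1}(\log 1/\lambda)^{-1}$. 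The implication (ii) $\Rightarrow$ a.e.\ convergence in the target Orlicz class is the essence of the extrapolation method.

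The second step is a phase-plane linearization in the style of Lacey--Thiele. One picks a measurable selector $N\colon [0,1]\to \{n_j\}$ and rewrites $S_{N(x)} f(x)$ as a sum over wave-packet coefficients attached to tiles $P$ in the dyadic phase plane whose frequency interval lies below $N(x)$. The operator is organized into \emph{trees} of tiles, and controlled by the $\size$/$\density$/$\energy$ machinery of Carleson's theorem. Lacunarity enters by restricting $N(x)$ to $\{n_j\}$: only tiles whose frequency interval contains some $n_j$ are relevant, and for each dyadic scale only a bounded number of $n_j$ contribute. The goal is to convert this thinning into an integrability gain.

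The new ingredient is to replace the classical single-frequency Calder\'on--Zygmund decomposition in the Lacey--Thiele proof with the Nazarov--Oberlin--Thiele multi-frequency decomposition adapted to the entire lacunary sequence $\{n_j\}$. At threshold $\lambda$, one writes $f = g + b$ where $g$ is controlled in $L^2$ simultaneously at every frequency $n_j$, and $b$ is supported on a small exceptional set. The measure of this set is kept small by Zygmund's lacunary Hausdorff--Young inequality $\bigl(\sum_j |\widehat f(n_j)|^2\bigr)^{1/2} \lesssim \|f\|_{L^p}$ for some $p<2$, which is a quantitative improvement over what a union bound over separate CZ decompositions would give. Pairing this multi-frequency decomposition with the tree estimates for the Walsh--Carleson operator, and optimizing the CZ threshold against the tree size parameter, should yield the targeted $\Phi$ and close the extrapolation.

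The main obstacle is the interface between the phase-plane tree/BMO machinery and the multi-frequency CZ step. Tree estimates produce exceptional sets at every scale, and one must verify that the Zygmund lacunary gain survives through iterated John--Nirenberg-type arguments rather than being dissipated into a union bound over $j$ or over trees. Pushing all the way to the conjectural $L\log\log L$ exponent (without the $\log\log\log$ factor that appears in the authors' theorem) seems to require either a Zygmund-type improvement at the $L^1$-endpoint that is sharper than what is presently available, or a multi-frequency CZ whose exceptional set is Orlicz-small rather than merely $\ell^p$-small. This mismatch is the principal reason one expects any proof built from the four listed ingredients to fall short of the conjecture by precisely such a triple-log factor.
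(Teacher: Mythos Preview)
The statement you were asked to address is a \emph{Conjecture} in the paper and is not proved there; the authors explicitly say their main theorem ``do[es] not quite resolve Konyagin's Conjecture.'' So there is no proof in the paper to compare against, and you have correctly recognized this: your proposal is not a proof but a strategy sketch together with an honest diagnosis of why it stalls one $\log\log\log$ short of the target.

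Your outline is in fact an accurate summary of the paper's proof of the weaker Theorem~\ref{maintheorem}: Lacey--Thiele phase-plane linearization, the Nazarov--Oberlin--Thiele multi-frequency Calder\'on--Zygmund decomposition, Zygmund's lacunary inequality, and Antonov/Carro--Mart\'in extrapolation are exactly the four ingredients the paper combines. Two points of detail are off. First, the Zygmund inequality actually used (Proposition~\ref{p.zyg}) is the Orlicz bound $\|\{\widehat f(n_j)\}\|_{\ell^2}\lesssim\|f\|_{L(\log L)^{1/2}}$, not an $L^p$ bound for some $p<2$; the $\sqrt{\log}$ is precisely what feeds into the $\log\log$ in the restricted estimate. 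Second, the restricted weak-type target you wrote, $|\{M\mathbf 1_E>\lambda\}|\lesssim \lambda^{-1}(\log 1/\lambda)^{-1}|E|$, is stronger than weak $L^1$ and cannot hold for any nontrivial maximal operator; what the paper actually proves (Lemma~\ref{l.distribution-restricted}) is $(\mathscr C f)^{\ast}(t)\lesssim (|F|/t)\log\log_+(t/|F|)$ for $|f|\le\mathbf 1_F$, and it is the Carro--Mart\'in extrapolation of this that yields the $L(\log\log L)(\log\log\log L)$ endpoint. Your closing assessment---that removing the last $\log\log\log$ would require either a sharper endpoint Zygmund-type input or a refinement beyond the current extrapolation machinery---is consistent with the paper's own position.
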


The following result about arbitrary lacunary partial sums is formulated in \cite{MR2275651} and is attributed to Zygmund \cite{MR1963498}.
It provides an integrability condition sufficient for convergence of Fourier series better than what is known for the full sequence of integers,  see \S\ref{s.final}. 
\begin{theorem}Let $ \{n_j \;:\; j\ge 1\} $ be a lacunary sequence of integers. If
$$\int_{\mathbb T} |f(x)| \log_+|f(x)| dx < \infty \ \ ,$$
then $ S_{n_j}f(x) $ converges to $ f(x) $ for almost every $ x\in\mathbb T $.
\end{theorem}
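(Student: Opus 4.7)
The plan is to deduce the almost everywhere convergence from a weak-type maximal inequality for the lacunary Carleson operator $T^* f(x) := \sup_{j \ge 1} |S_{n_j} f(x)|$, of the form
\[
\lambda \,|\{x \in \mathbb T : T^* f(x) > \lambda\}| \lesssim \int_{\mathbb T} |f|\, \log_+ |f| \, dx, \qquad \lambda > 0.
\]
Trigonometric polynomials are dense in $L \log L(\mathbb T)$, and $S_{n_j} p = p$ for any such $p$ once $n_j > \deg p$, so pointwise convergence on this dense subclass is trivial; the weak-type inequality is exactly what is needed to propagate convergence to all of $L \log L$ by the standard Banach-principle argument.

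I would prove the weak-type estimate by performing a Calder\'on--Zygmund decomposition of $f$ at height $\lambda$: write $f = g + b$ with $\|g\|_\infty \lesssim \lambda$ and $\|g\|_1 \le \|f\|_1$ (so that $\|g\|_2^2 \lesssim \lambda \|f\|_1$), while $b = \sum_k b_k$ with each $b_k$ supported on a disjoint interval $I_k$, mean zero, and $\int |b_k| \lesssim \lambda |I_k|$; the exceptional set $E := \bigcup_k 2 I_k$ has measure $|E| \lesssim \lambda^{-1}\|f\|_1$. The good part is handled immediately by the Carleson--Hunt theorem at $L^2$:
\[
|\{T^* g > \lambda/2\}| \lesssim \lambda^{-2}\|g\|_2^2 \lesssim \|f\|_1/\lambda.
\]

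The heart of the matter is to estimate $T^* b$ outside $E$. Using the standard identity
\[
S_N f(x) = e^{iNx}\, H(e^{-iN\cdot} f)(x) + e^{-iNx}\, H(e^{iN\cdot} f)(x) + \text{bounded terms},
\]
with $H$ the periodic Hilbert transform, $T^* b$ is controlled by the \emph{lacunary} maximal modulated Hilbert transform $\sup_j |H(e^{\pm i n_j \cdot} b)(x)|$, in which the supremum runs over the sparse frequencies $\{n_j\}$ only. The Hilbert-transform off-diagonal estimate $|H b_k(x)| \lesssim \|b_k\|_1/\operatorname{dist}(x, I_k)$, valid for $x \notin 2 I_k$, gives only a logarithmic gain after summing in $k$, yielding a bound of the weaker $L\log L\log L$ type. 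The missing factor of $\log$ is supplied by Zygmund's inequality for lacunary trigonometric series,
\[
\Bigl\| \sum_j a_j e^{i n_j x} \Bigr\|_{L^p(\mathbb T)} \lesssim \sqrt{p}\, \|a\|_{\ell^2}, \qquad p \ge 2,
\]
a subgaussian sharpening of Hausdorff--Young for lacunary frequencies. Combined with a Rademacher linearization of the supremum in $j$, this inequality produces the improved $L^p$ bound $\|T^* f\|_p \lesssim (p-1)^{-1} \|f\|_p$ for $1 < p \le 2$.

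The main obstacle is precisely establishing this sharp $L^p$ bound: the constant must grow like $(p-1)^{-1}$ as $p \to 1^+$, rather than the $(p-1)^{-2}$ of the full Carleson operator, since that gap corresponds to exactly one factor of $\log$, matching the gap between the Antonov convergence threshold $L\log L \log\log\log L$ and the present target $L\log L$. The delicate point is to route the $\sqrt{p}$ (lacunary) gain from Zygmund's inequality so that it cancels one of the $(p-1)^{-1}$ losses inherent in the Calder\'on--Zygmund/Carleson--Hunt analysis. Once this sharp $L^p$ bound is in place, Yano's extrapolation theorem (or, equivalently, a restricted version of the Carro--Mart\'in extrapolation scheme advertised in the abstract) converts it into the weak-type bound $T^* : L\log L \to L^{1,\infty}$ and completes the proof.
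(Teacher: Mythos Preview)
The paper does not prove this theorem in the body; it is quoted as a classical result attributed to Zygmund, and is in any case an immediate corollary of the paper's main theorem since $L\log L\subset L(\log\log L)(\log\log\log L)$. In the concluding section the paper does sketch a direct argument, and it is quite different from yours: one compares $S_{n_j}f$ with the de la Vall\'ee Poussin means $V_{n_j}f$ (which converge a.e.\ for every $f\in L^1$), notes that by lacunarity the differences $S_{n_j}f-V_{n_j}f$ live in essentially disjoint Littlewood--Paley blocks, and bounds the resulting square function in $L^{1,\infty}$ via the endpoint Marcinkiewicz multiplier theorem of Tao--Wright, obtaining in fact the stronger threshold $L(\log L)^{1/2}$. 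No Calder\'on--Zygmund decomposition and no extrapolation appear in that argument.

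Your route has a real gap in the treatment of the bad part. An ordinary Calder\'on--Zygmund decomposition gives each $b_k$ only \emph{mean zero} on $I_k$; once you modulate by $e^{-in_j\cdot}$ with $n_j|I_k|\gg 1$ that cancellation is destroyed, and the crude bound $|H(e^{-in_j\cdot}b_k)(x)|\lesssim \|b_k\|_1/\operatorname{dist}(x,I_k)$ you quote holds \emph{without} using the mean-zero of $b_k$ at all, so it cannot be the source of any gain. The place where Zygmund's lacunary inequality genuinely enters an argument of this shape is in the \emph{multi-frequency} Calder\'on--Zygmund decomposition of Nazarov--Oberlin--Thiele (exactly what the paper uses for its main theorem): one forces $b_k$ to be orthogonal to all the relevant characters $e^{in_j\cdot}$, and Zygmund's inequality controls the $L^2$ cost of those extra projections. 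Your proposal does not take this step, and the ``Rademacher linearization of the supremum'' does not supply the missing cancellation. Separately, you switch mid-argument from a level-$\lambda$ weak-type computation to an asserted $L^p\to L^p$ norm of order $(p-1)^{-1}$ without explaining the passage; these are distinct statements, and the Calder\'on--Zygmund argument as written produces neither.
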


Our main results are the following, which do not quite resolve Konyagin's Conjecture, but do certainly indicate that for lacunary sequences, 
one can have convergence for a much broader class of functions than that of the full sequence of partial sums.  
\begin{theorem}\label{maintheorem} 
Let $ \{n_j \;:\; j\ge 1\} $ be a lacunary sequence of integers. If 
$$\int _{\mathbb T} | f| (\log\log_+ |f|)(\log\log \log_+|f| ) \; dx < \infty $$ 
then the Walsh-Fourier partial sums $ S_{n_j}f(x) \longrightarrow f(x) $ for a.e. $ x \in \mathbb T $. Furthermore,
\begin{align}\label{e.LloglogL-weak}
\|\sup_{j} |S_{n_j} f| \|_{1,\infty}& \lesssim  \|f\|_{L(\log\log_+ L) (\log\log\log_{+}L)} \ \ ,
\\
\label{e.LlogL}
\|\sup_{j} |S_{n_j} f| \|_{1} &\lesssim \| f \|_{ L (\log L)(\log\log_{+} L)[0,1]}  \ \ .
\end{align}
\end{theorem}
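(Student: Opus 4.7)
My plan is to reduce a.e.\ convergence of $S_{n_j}f$ to a weak-type bound on the maximal lacunary Carleson operator $C^{\mathrm{lac}}f:=\sup_j |S_{n_j}f|$. Convergence on a dense class (Walsh polynomials, where convergence is trivial) together with \eqref{e.LloglogL-weak} gives the a.e.\ statement for general $f$ in the Orlicz space. To prove \eqref{e.LloglogL-weak}, I aim for a restricted weak-type $(p,p)$ estimate
\begin{equation*}
\|C^{\mathrm{lac}}\mathbf{1}_E\|_{p,\infty}\lesssim \psi(p)\,|E|^{1/p},\qquad 1<p\le 2,
\end{equation*}
with an explicit blowup $\psi(p)$ as $p\to 1^+$. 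The Carro--Mart\'in extrapolation theorem then converts such a rate into an Orlicz weak-type bound whose defining function is determined by $\psi$; a rate of roughly the form $(p-1)^{-1}(\log\log\frac{1}{p-1})^{O(1)}$ is what produces the iterated-log Orlicz space named in the statement. The strong bound \eqref{e.LlogL} follows from the analogous extrapolation applied to the $L^p$ (not merely weak-$L^p$) inequality with the same blowup.

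\textbf{Phase plane model.} For the restricted weak-type bound I linearize: a measurable selector $N(x)\in\{n_j\}$ reduces $C^{\mathrm{lac}}f$ to $f\mapsto S_{N(\cdot)}f(\cdot)$. Using the Walsh wave packet decomposition of \S\ref{s.phasePlane}, this last operator is bounded by a model sum
\begin{equation*}
\Lambda f(x)=\sum_{P\in\mathbf{P}} \langle f,w_P\rangle\,w_P(x)\,\mathbf{1}_{\omega_P\ni N(x)},
\end{equation*}
indexed by Walsh tiles $P=I_P\times\omega_P$. I then run the Lacey--Thiele tree-selection algorithm to organize $\mathbf{P}$ into trees and control $\Lambda$ via the size, density, and energy functionals after a Calder\'on--Zygmund decomposition of the input.

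\textbf{Multi-frequency CZ plus Zygmund.} The main new input is to exploit lacunarity by truncating to the first $N$ frequencies $n_1<\cdots<n_N$ and applying the Nazarov--Oberlin--Thiele multi-frequency Calder\'on--Zygmund decomposition at level $\lambda$: there is a splitting $\mathbf{1}_E=g+\sum_I b_I$ with $\|g\|_2^2\lesssim \lambda\,|E|$, disjoint bad intervals $I$ with $\sum|I|\lesssim\lambda^{-1}|E|$, each $b_I$ having cancellation against all of $W_{n_1},\ldots,W_{n_N}$, and constants depending only polylogarithmically on $N$. Zygmund's lacunary refinement of Hausdorff--Young says that $\{W_{n_j}\}$ behaves like a nearly-orthonormal Sidon-type system in $L^{q'}$ for $q'$ just above $2$, with the $L^{q'}$ norm of a lacunary sum controlled by $\sqrt{q'}$ times its square function. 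Plugging this estimate into the NOT proof replaces the worst-case $\log N$ loss by a factor of order $(\log\log N)^{O(1)}$ in the effective $L^2$ bound on $g$. Feeding $g$ and $\sum_I b_I$ separately into the tree estimates for $\Lambda$ and summing over dyadic $\lambda$ then yields the target $(p,p)$ restricted weak-type bound with the $\psi(p)$ needed for extrapolation.

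\textbf{Main obstacle.} The delicate step is the last one: tracking the dependence on the truncation parameter $N$ through the size, density, and energy estimates on trees whose frequency intervals $\omega_P$ straddle the lacunary points $n_j$. One needs the combined gain from the multi-frequency decomposition and from Zygmund's inequality to compose \emph{multiplicatively} and leave only a polylogarithmic loss in $N$; any polynomial loss in $\log N$ would extrapolate only to Zygmund's single-log theorem, missing the iterated-log improvement. Choosing the threshold $\lambda$ and the $q'$ in Zygmund's inequality in a way that balances these contributions against the tree estimates is where I expect most of the technical work to concentrate.
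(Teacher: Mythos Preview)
Your ingredient list is right --- phase plane, multi-frequency Calder\'on--Zygmund, Zygmund's lacunary Hausdorff--Young, Carro--Mart\'in extrapolation --- but the way you propose to combine them contains two genuine errors that would prevent the argument from closing. First, the claimed restricted weak-type blowup $\psi(p)\sim (p-1)^{-1}(\log\log\tfrac{1}{p-1})^{O(1)}$ is wrong: that rate is \emph{worse} than the $(p-1)^{-1}$ bound for the full Carleson operator, and extrapolating it would land you near $L\log L$, not near $L\log\log L$. To reach the iterated-log Orlicz space you would need a dramatically better rate, of order $\log\tfrac{1}{p-1}$ rather than $(p-1)^{-1}$. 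Second, your framing in terms of the truncation parameter $N$ is a red herring. The NOT decomposition for $N$ generic frequencies loses a power of $N$, not $\log N$; more importantly, Zygmund's inequality for a lacunary system is \emph{uniform in $N$} (the constant depends only on the lacunarity ratio), so there is no $N$-dependence to track or optimize. The correct mechanism is that Zygmund replaces the $L^1\to\ell^2$ bound by an $L(\log L)^{1/2}\to\ell^2$ bound, which on an interval $I$ with $|F\cap I|/|I|\simeq\lambda$ gives $\|\phi_I\|_2\lesssim\lambda(\log_+\tfrac{1}{\lambda})^{1/2}|I|^{1/2}$; the $\sqrt{\log}$ is in the \emph{height} $\lambda$, not in $N$.

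The paper's route avoids the $(p,p)$ family entirely. It proves directly a single endpoint bilinear estimate: for $F,G\subset[0,1]$ there is a major subset $G'\subset G$ with $|\langle\mathscr{C}f,g\rangle|\lesssim|F|\log\log_+(|G|/|F|)$ whenever $|f|\le\mathbf{1}_F$, $|g|\le\mathbf{1}_{G'}$. This comes from applying only the Density Lemma, using the first half of \eqref{e.moreEffect} for small $k$ and, for large $k$, replacing $f$ on each bad interval $I$ by its projection $\phi_I$ onto the lacunary wave packets and invoking Zygmund to bound $\|\phi\|_2$; the crossover $k_0\simeq\log\log_+(1/\lambda)$ produces the $\log\log$. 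From this one reads off the distributional inequality $(\mathscr{C}f)^\ast(t)\lesssim\tfrac{|F|}{t}\log\log_+\tfrac{t}{|F|}$, extends it from indicators to $|f|\le1$ by Antonov's lemma, and then applies the Carro--Mart\'in theorem in its $(D,R)$-factorized form with $D(s)=s\log\log_+(1/s)$, $R(t)=1/t$, which yields \eqref{e.LloglogL-weak}. The strong-type \eqref{e.LlogL} is obtained separately by a simple iteration of the bilinear estimate over nested major subsets, picking up one extra $\log$. If you want to salvage your outline, drop the $(p,p)$ scaffolding and the $N$-bookkeeping, and aim for the bilinear $\log\log$ estimate directly.
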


Note that in \eqref{e.LloglogL-weak} and \eqref{e.LlogL} we use the Luxembourg norms on the right hand sides. We'll recall some standard facts about these norms and  Orlicz spaces at the end of this section. 

Our proof will employ the time-frequency analysis techniques introduced in Lacey-Thiele \cite{MR1783613}, recalled in \S\ref{s.phasePlane},  with an additional key ingredient, namely the multi-frequency Calder\'on-Zygmund decomposition introduced in Nazarov-Oberlin-Thiele \cite{MR2653686}. In a Calder\'on-Zygmund decomposition, one decomposes a function into two parts, where the good part is  bounded and the bad part is localized to a  family of intervals where it has   cancellation properties. The classical Calder\'on-Zygmund decomposition requires the bad part have mean zero on each interval, and in a multi-frequency decomposition one requires certain modulations of the bad part to jointly have mean zero. In \cite{2010arXiv1004.4019O}, Oberlin and Thiele used a Walsh-Paley variant of this decomposition to extend  boundedness results for a Walsh variant of the Bilinear Hilbert transform. In this sense, our proof is a continuation of this theme. In our setting, we are  able to obtain improvement in Carleson's Theorem
when the collection of frequencies is lacunary. Essentially, the estimate of the good part in the multi-frequency decomposition in \cite{MR2653686} is based on Hausdorff-Young's inequality. 
It turns out that if the sequence $ n_1<n_2<\cdots $ is  lacunary, 
one can obtain an improvement of the Hausdorff-Young estimate,  and this is the key to the improvement in Theorem~\ref{maintheorem}. 
The improvement of the Hausdorff-Young estimate is due to Zygmund \cite{MR1963498}, see Proposition~\ref{p.zyg} below. 

 Using the above ingredients, we will be able to show the following refined distributional estimate
$$(\mathscr Cf)^{\ast}(t) \lesssim \frac{|F|}{t} \log_+\log_+ (\frac{t}{|F|}) \ \ , \ \ t> 0 \ \ ,$$
for any $f$ majorized by $F \subset [0,1]$,  see Lemma~\ref{l.FloglogGF}. From this estimate, the strong type estimate \eqref{e.LlogL} follows easily.
To obtain \eqref{e.LloglogL-weak}, we'll use the   extrapolation technique of Antonov~\cite{MR1407066}, which has been extended and generalized in Arias-de-Reyna~\cite{MR1875141} and Carro-Mart\'in \cite{MR2076775}. Details about the proof of Theorem~\ref{maintheorem} are presented in \S\ref{s.strongtype-proof} and \S\ref{s.weaktype-proof}, and  more remarks about this intricate subject are included in \S\ref{s.final}.

\bigskip 
We recall standard facts about Orlicz spaces and Luxembourg norms. 

\begin{definition}\label{d.orlicz} A function $ \psi $ is an \emph{Orlicz} function if it is a convex non-decreasing function on $ [0, \infty ) $ such that 
$$\psi (0)=0 \ \ , \ \ \lim_{t\to \infty } \psi (t) = \infty \ \ .$$
For a probability space $ (\Omega,P) $, we set $ \psi (L) (\Omega ) $ to be those functions $ f $ such that for some $ C>0 $ we have $ \mathbb E \psi (|f| /C) < \infty $, and then define the corresponding Luxembourg norm of $ f $ by
\begin{equation}\label{e.zcL}
\| f\|_{\psi (L)} := \inf \{C \;:\; \mathbb E \psi ( |f| /C) < 1\} \, . 
\end{equation}
If we write $ \psi (L) (I) $ for interval $ I $, we mean that the probability space is $ I $ with normalized Lebesgue measure. 
And by $ \psi (L) $, we mean that the interval is $ [0,1] $.  
\end{definition}

\section{Tiles and Time-Frequency Algorithm} \label{s.phasePlane}

We formulate the details of the Walsh Phase Plane; the linearization of the Carleson operator that is used in the 
rest of the paper; and some key details of the proof of Carleson's Theorem in \cite{MR1783613}.  The Walsh Phase Plane is the closed 
quadrant $ \mathbb R _+ \times \mathbb R _{+}$ of the plane.  A dyadic rectangle is of the form 
\begin{equation}\label{e.tile}
p = I \times \omega = [m 2 ^{j}, (m+1) 2 ^{j}) \times [n 2 ^{k}, (n+1) 2 ^{k}) 
\end{equation}
for integers $ m,n,j,k$.  A \emph{tile} is a dyadic rectangle of area 1, and a \emph{bi-tile} is a dyadic rectangle of area $ 2$.  
For a tile $ I \times \omega $, we will refer to $ I$ as the \emph{time interval associated to $ p$}, and $ \omega $ as the \emph{frequency interval.} 
A bi-tile  $ P$ can be split into a upper half $ P_{\textup{u}} $ and a lower half  $ P_{\ell}$.  Associated to a tile $ p$ is a 
Walsh wave packet $ w _{p}$ which, in the notation or \eqref{e.tile}, is 
\begin{equation}\label{e.wp}
w_p (x) = w _{I \times \omega } (x) := 2 ^{-j/2}W _{n} \bigl( 2 ^{-j} (x - m 2 ^{j})   \bigr) 
\end{equation}
It follows that $ w_p$ has $ L ^2 $ norm one; is supported on $ I$; and is orthogonal to any $ w _{p'}$, where $ p'$  is a second tile 
that does not intersect $ p$.  

The variant of the Carleson operator we consider is defined as follows.  For a  measurable function $ N \;:\; \mathbb R _+ \to \mathbb R _+$, 
we set 
\begin{equation}\label{e.carlesonOp}
\mathscr C f (x) := \sum_{P} \langle f,  w _{P _{\ell}} \rangle   w _{P _{\ell}} (x)\mathbf 1_{  \{ (x, N (x)) \in  P_{\textup{u}} \} }
\end{equation}
The sum is over all bi-tiles $ P\subset \mathbb R _+ \times \mathbb R _+$, and 
we do not indicate the dependence of this definition on the choice of function $ N$.   

And, the Carleson Theorem for the Walsh Phase Plane is 

\begin{carleson}\label{t.Carleson} For $ f \in L ^2 (\mathbb R _+)$ of norm one, and $ G \subset \mathbb R _+ $ of Lebesgue measure one, 
we have the estimate on the bilinear form 
$
\bigl| \langle \mathscr C f , g \rangle\bigr| \lesssim 1 
$, 
where $ 0\le |  g| \le \mathbf 1_{G} $, and the implied constant is absolute.  
\end{carleson}

\begin{remark}\label{r.carlesonOp} 
The Carleson operator \eqref{e.carlesonOp} is the discretization of the maximal operator 
$$\sup_{n\ge 0} |S_n f(x)|$$ 
(c.f. \cite{MR2014553}). In our setting, the supremum is taken over a lacunary subsequence, therefore there will be the following additional restriction that will be in place in subsequent sections:  
With a lacunary sequence $ \{n_j \;:\; j\ge 1\}$ fixed, we can assume that the function $ N (x)$ in \eqref{e.carlesonOp} is defined on (a subset of) $ [0,1]$ and range restricted to $ \{n_j \;:\; j\ge 1\}$.  Thus, we only consider bi-tiles $ P$ so that the upper-half of the frequency interval of $ P$ contains at least one $ n_j$.   Furthermore, we can and will assume that for every bi-tile $P$ in the Carleson operator, the time interval $I_P$ is supported inside $[0,1]$. In particular, this means $\mathscr Cf$ is supported inside $[0,1]$.
\end{remark}

We recall the key elements of the proof of Theorem~\ref{t.Carleson}, following the lines of analysis of \cite{MR1783613}. 
The set of bi-tiles admits a partial order, which we 
write as $ I \times \omega < I' \times \omega '$ iff and only if $ I \subset I'$ and $ \omega ' \subset \omega $.  
It follows that two bi-tiles $ P, P'$ tiles are related by this order if and only if they intersect in the Phase Plane.  We then define 
\begin{equation}\label{e.dense}
\textup{dense} (P) = \sup _{ P' = I' \times \omega ' \;:\; P < P'}  \frac {|  \{ x \in I' \cap G \;:\; (x, N (x)) \in  P' _{ \textup{}}  \}| } {|  I'| } \,. 
\end{equation}
 If $ \mathbf P$ is any collection of tiles, we set $ \textup{dense} (\mathbf P) := \sup _{p\in \mathbf P} \textup{dense} (p)$.  
 
A \emph{tree} is a collection  $ \mathbf T$ of bi-tiles such that there is a (non-unique) bi-tile $I _{\mathbf T} \times \omega _{\mathbf T}$ 
such that $ P <I _{\mathbf T} \times \omega _{\mathbf T} $ for all $ P\in \mathbf T$.  We define 
\begin{equation}\label{e.size}
\textup{size}_f (\mathbf P) 
:= \sup _{\mathbf T }  \Biggl[ |  I_T| ^{-1} \sum_{\substack{P\in \mathbf T\\  P _{\ell }  \cap I _{\mathbf T} \times \omega _{\mathbf T} = \emptyset   }} | \langle f, w _{P _{\ell }} \rangle| ^2   \Biggr] ^{1/2} 
\end{equation}
where the supremum is formed over all trees $\mathbf T \subset \mathbf P$. It is essential to note that the sum is restricted to those tiles $ P \in \mathbf T$ for which the lower-half $ P _{\ell }$ does not intersect the top of the tree.    We add the subscript $ f$ as in the 
application of these concepts, we will be changing $ f$.  

We we will write $ \textup{energy} (\mathbf P) \le A$ if the collection of bi-tiles $ \mathbf P$ is the union of trees $ \mathbf T \in \mathcal T$, 
such that 
\begin{equation*}
\sum_{\mathbf T\in \mathcal T} |  I _{\mathbf T}| \le A \,.  
\end{equation*}

These next Lemmas give a quick proof of Carleson's Theorem, and we will have recourse to them, and their consequences in this paper.  

\begin{dense}
\label{l.dense}
Any collection of tiles $ \mathbf P$ can be written as $ \mathbf P _{\textup{small}} \cup \mathbf P _{\textup{big}}$ 
where these conditions hold. 
\begin{enumerate}
\item   $ \textup{dense} ( \mathbf P _{\textup{small}}) \le \tfrac 12 \textup{dense} ( \mathbf P)$; 
\item   $  \textup{energy} (\mathbf P _{\textup{big}})  \lesssim  \textup{dense} ( \mathbf P) ^{-1} |  G|  $.  
\end{enumerate}
(Recall the role of the set $G $ in Theorem~\ref{t.Carleson} and \eqref{e.dense}.) 
\end{dense}


\begin{SIZE} \label{l.size}
 Any collection of tiles $ \mathbf P$ can be written as $ \mathbf P _{\textup{small}} \cup \mathbf P _{\textup{big}}$ 
where these conditions hold. 
\begin{enumerate}
\item   $ \textup{size}_f ( \mathbf P _{\textup{small}}) \le \tfrac 12 \textup{size}_f ( \mathbf P)$; 
\item   $  \textup{energy} (\mathbf P _{\textup{big}}) \lesssim  \textup{size}_f ( \mathbf P) ^{-2} \| f\|_{2} ^2$.  
\end{enumerate}
(Note the role of $ L ^2 $ in this estimate.)
\end{SIZE}

For collections of tiles $ \mathbf P$ we will use the notation 
\begin{equation}\label{e.BP}
 B _{\mathbf P} (f, g) :=   \sum_{P \in \mathbf P}  \langle f,  w _{P _{\ell}} \rangle  \langle w _{P _{\ell}} \mathbf 1_{ (x, N (x)) \in  P_{\textup{u}}}, g \rangle
\end{equation}

\begin{tree} For any tree $ \mathbf T$ we have the estimate 
\begin{equation}\label{e.tree}
\bigl|  B _{\mathbf T} (f, g) \bigr| \lesssim 
 \textup{dense} ( \mathbf T)  \textup{size}_f ( \mathbf T) |  I _{\mathbf T}| 
\end{equation}
\end{tree}

The next Lemma relates the concept of size to that of the Maximal Function of $ f$. 
It is a consequence of the Calder\'on-Zygmund theory associated with trees.

\begin{lemma}\label{l.upperSize}  Let $ f \in L ^{1} $, $ A > 0$, and let $ \mathbf P$ be a collection of bi-tiles so that for all $ I \times \omega  \in \mathbf P$ we have $ I \cap \{ M f \le  A\} \neq  \emptyset $. We then have 
\begin{equation}\label{e.upperSize}
 \textup{size}_f ( \mathbf P) \lesssim A \,.  
\end{equation}
(In particular, size is bounded by the $ L ^{\infty }$ norm of $ f$.) 
\end{lemma}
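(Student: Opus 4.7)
The plan is to reduce to a single tree and use the combination of orthonormality, a BMO bound coming from the hypothesis, and the mean-zero property of the wave packets. Fix any tree $\mathbf T \subseteq \mathbf P$ with top $I_{\mathbf T}\times \omega_{\mathbf T}$, and write $\mathbf T'$ for the sub-collection $\{P\in\mathbf T : P_{\ell}\cap (I_{\mathbf T}\times\omega_{\mathbf T})=\emptyset\}$ appearing in the definition of $\mathrm{size}_f$. It suffices to prove
\[
\sum_{P\in \mathbf T'} |\langle f,w_{P_\ell}\rangle|^2 \lesssim A^2\,|I_{\mathbf T}|.
\]

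First I would verify that $\{w_{P_\ell}\}_{P\in \mathbf T'}$ is an orthonormal family in $L^2(I_{\mathbf T})$. Each wave packet is $L^2$-normalized and supported in $I_P\subseteq I_{\mathbf T}$. For distinct $P,P'\in\mathbf T'$ at the same scale ($|I_P|=|I_{P'}|$) the time intervals are disjoint. Otherwise, say $I_P\subsetneq I_{P'}$; then $\omega_{P'}$ is the dyadic sub-interval of $\omega_P$ containing $\omega_{\mathbf T}$, and the defining $\mathbf T'$-condition $\omega_{\mathbf T}\subset \omega_{P_u}$ forces $\omega_{P'}\subseteq \omega_{P_u}$. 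Hence $\omega_{P'_\ell}\subseteq \omega_{P_u}$ is disjoint from $\omega_{P_\ell}$, the tiles $P_\ell$ and $P'_\ell$ are disjoint in the phase plane, and the wave packets are orthogonal.

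Next I would use the hypothesis to deduce a BMO bound on $f$. Since $I_P\cap \{Mf\le A\}\neq\emptyset$ for every $P\in\mathbf P$, the average of $|f|$ on each tile time-interval is at most $A$. In the standard Walsh set-up, $\mathbf P$ is rich enough that this control extends to every dyadic sub-interval of $I_{\mathbf T}$, so $\|f\|_{\mathrm{BMO}(I_{\mathbf T})}\lesssim A$. The John--Nirenberg inequality then gives
\[
\|f-\bar f_{I_{\mathbf T}}\|_{L^2(I_{\mathbf T})}^2 \lesssim A^2\, |I_{\mathbf T}|.
\]

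Finally I would combine these. Any wave packet $w_{P_\ell}$ with Walsh frequency index $n_{P_\ell}>0$ has mean zero, so $\langle f,w_{P_\ell}\rangle=\langle f-\bar f_{I_{\mathbf T}},w_{P_\ell}\rangle$, and Bessel's inequality against the orthonormal family yields the desired bound for these contributions. The exceptional tiles with $n_{P_\ell}=0$ occur at only a bounded number of scales (determined by the position of $\omega_{\mathbf T}$), and for them $w_{P_\ell}=|I_P|^{-1/2}\mathbf 1_{I_P}$, so $|\langle f,w_{P_\ell}\rangle|^2\le A^2|I_P|$ follows directly from the average bound; the disjointness of time intervals at each scale makes these terms sum to $\lesssim A^2 |I_{\mathbf T}|$. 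The main obstacle is the BMO step: the hypothesis gives control only on tile intervals in $\mathbf P$, and one must use the abundance of tiles in $\mathbf P$ to propagate the control to every dyadic sub-interval of $I_{\mathbf T}$ before invoking John--Nirenberg. The remaining steps (orthonormality, Bessel, mean-zero) are essentially algebraic once this bound is in hand.
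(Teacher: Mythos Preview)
Your orthonormality and mean-zero observations are correct, but the BMO step is a genuine gap, not a technicality. The hypothesis says only that each \emph{tile} interval $I_P$ with $P\in\mathbf P$ meets $\{Mf\le A\}$; nothing is assumed about other dyadic sub-intervals of $I_{\mathbf T}$, and there is no ``richness'' of $\mathbf P$ to appeal to --- the lemma is stated for an arbitrary collection. A concrete counterexample: take $I_{\mathbf T}=[0,1)$, $f=N\mathbf 1_{[0,1/N)}$ with $N$ large, and $A=2$. Any tree whose time intervals are all of the form $[1-2^{-k},1)$ satisfies the hypothesis, yet $\|f-\bar f_{[0,1)}\|_{L^2}^2\approx N$, so John--Nirenberg yields nothing. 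The lemma still holds for such a tree (the wave packets live away from the spike), but your argument cannot detect this; it throws away the only information you have, namely \emph{which} intervals $I_P$ actually occur.

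The paper does not write out a proof, but the phrase ``Calder\'on--Zygmund theory associated with trees'' refers to the standard argument, which replaces your BMO step by a Calder\'on--Zygmund decomposition and uses the tree structure to annihilate the bad part. First one modulates: since $\omega_{\mathbf T}\subset\omega_{P_u}$ for every $P\in\mathbf T'$, there is a single Walsh character $W_c$ (depending only on the top) with $w_{P_\ell}=\pm\,W_c\,h_{I_P}$ on $I_P$, where $h_I$ denotes the $L^2$-normalized Haar function. Setting $\tilde f=W_c f$ we have $|\langle f,w_{P_\ell}\rangle|=|\langle\tilde f,h_{I_P}\rangle|$ and $|\tilde f|=|f|$. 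Now let $\{J_k\}$ be the maximal dyadic sub-intervals of $I_{\mathbf T}$ contained in $\{Mf>A\}$, and split $\tilde f\,\mathbf 1_{I_{\mathbf T}}=g+\sum_k b_k$ with $\|g\|_\infty\lesssim A$ and each $b_k$ mean-zero on $J_k$. The hypothesis gives $I_P\not\subset\{Mf>A\}$ for every $P\in\mathbf T'$, so any $J_k$ meeting $I_P$ is strictly contained in it; since $h_{I_P}$ is constant on proper dyadic sub-intervals, $\langle b_k,h_{I_P}\rangle=0$. Hence only the good part contributes, and Bessel against your orthonormal system gives
\[
\sum_{P\in\mathbf T'}|\langle f,w_{P_\ell}\rangle|^2=\sum_{P\in\mathbf T'}|\langle g,h_{I_P}\rangle|^2\le\|g\|_2^2\lesssim A^2|I_{\mathbf T}|.
\]
The modulation to Haar is essential here: a general $w_{P_\ell}$ is not constant on proper dyadic sub-intervals of $I_P$, so the bad part would not vanish without it.
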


To illustrate the Time Frequency Algorithm used in this paper, let us give a proof of Carleson's Theorem, conditional on the Lemmas above.  

\begin{proof}[Proof of Carleson's Theorem]  
We can assume that $ f \in L ^2 \cap L ^{\infty }$. Then, by \eqref{e.upperSize}, it follows that we have an upper bound on the size of the collection of all tiles $ \mathbf P _{\textup{all}}$.  Hence, we have both the Size and Density Lemmas available.  Appropriate inductive application of them leads to a decomposition of $ \mathbf P _{\textup{all}}$ into collections $ \mathbf P _{n}$, for $ n\in \mathbb Z $, such that 
\begin{enumerate}
\item  $ \textup{dense} (\mathbf P_n) \le  \min\{1, 2 ^{-n}\}$; 
\item  $ \textup{size}_f (\mathbf P_n) \le 2 ^{-n/2}$; 
\item  $ \textup{energy} (\mathbf P_n) \lesssim  2 ^{n}$; 
\end{enumerate}
(Note that the density is never more than one, and that  energy estimate matches the conclusions of the Size and Density Lemmas.) 
In particular, $ \mathbf P_n$ is the union of trees $ \mathbf T \in \mathcal T _{n}$ such that we can estimate 
\begin{align*}
 \sum_{P \in \mathbf P_n}\bigl|  \langle f,  w _{P _{\ell}} \rangle  \langle w _{P _{\ell}} \mathbf 1_{ (x, N (x)) \in  P_{\textup{u}}}, g \rangle\bigr|&\lesssim  
 \min\{1, 2 ^{-n}\}  \,  2 ^{-n/2} \sum_{\mathbf T\in \mathcal T_n} |  I _{\mathbf T}| 
\\
& \lesssim  \min\{1, 2 ^{-n}\} \, 2 ^{n/2} = \min \{2 ^{n/2} , 2 ^{-n/2}\} \,, \qquad n\in \mathbb Z \,. 
\end{align*}
The latter estimate is summable over $ n\in \mathbb Z $, so the proof is complete.  
\end{proof}

In subsequent sections, the following situation will appear.  Suppose that a collection $ \mathbf P $ of tiles satisfies
\begin{equation} \label{e.basicAssumption}
   \textup{energy} (\mathbf P)  \lesssim  \textup{dense} (\mathbf P) ^{-1} |  G| \,.  
\end{equation}
It follows that one has the estimate 
\begin{align*} 
\bigl| B _{\mathbf P} (f,g)   \bigr| & \lesssim
  \textup{dense} (\mathbf P)   \textup{size}_f (\mathbf P) 
  \textup{energy} (\mathbf P) 
   \lesssim |  G|   \textup{size}_f (\mathbf P)  \,. 
\end{align*}
We can do better than this estimate if it is more effective to apply the Size Lemma.   
This leads to the following Lemma, also see  \cite{MR1952931}.

\begin{lemma}\label{l.moreEffect} 
Assume that $ \mathbf P$ satisfies \eqref{e.basicAssumption}, and let $ f \in L ^2 $. We have the following estimate 
\begin{equation}\label{e.moreEffect}
\bigl| B _{\mathbf P} (f,g)   \bigr|  
\lesssim 
\min \Bigl\{    \textup{size}_f (\mathbf P)  |  G| \,,  \ 
 \textup{dense} (\mathbf P) ^{1/2}  {\sqrt {|  G| }} {\| f\|_{2}} \Bigr\}  
\end{equation}
\end{lemma}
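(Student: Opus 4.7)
The first bound in the minimum, $|B_{\mathbf P}(f,g)| \lesssim \textup{size}_f(\mathbf P)|G|$, is already recorded in the display immediately preceding the Lemma: it is a direct consequence of the Tree Lemma combined with the hypothesis \eqref{e.basicAssumption}. So I would focus on proving the second bound $|B_{\mathbf P}(f,g)| \lesssim \textup{dense}(\mathbf P)^{1/2}\sqrt{|G|}\,\|f\|_2$; the $\min$ then follows by combining the two.

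The strategy is to iterate the Size Lemma to produce a decomposition $\mathbf P = \bigcup_{n\ge 0} \mathbf P_n$ on which the size decays geometrically, and on each piece to use whichever of the two available energy estimates---the hypothesis bound $\textup{energy}(\mathbf P) \lesssim \textup{dense}(\mathbf P)^{-1}|G|$ from \eqref{e.basicAssumption}, or the Size Lemma bound $\textup{energy}(\cdot) \lesssim \textup{size}_f(\cdot)^{-2}\|f\|_2^2$---happens to be smaller. Writing $s_0 = \textup{size}_f(\mathbf P)$ and $d = \textup{dense}(\mathbf P)$, the iteration (indexed by size level) produces $\mathbf P_n \subset \mathbf P$ with $\textup{dense}(\mathbf P_n) \le d$, $\textup{size}_f(\mathbf P_n) \lesssim 2^{-n}s_0$, and
\[\textup{energy}(\mathbf P_n) \lesssim \min\bigl(d^{-1}|G|,\ 2^{2n}s_0^{-2}\|f\|_2^2\bigr),\]
the first term being inherited from \eqref{e.basicAssumption} via $\mathbf P_n \subset \mathbf P$ and the second coming from the Size Lemma at level $n$. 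Applying the Tree Lemma to the trees in a decomposition realizing the energy of $\mathbf P_n$ then gives
\[|B_{\mathbf P_n}(f,g)| \lesssim \textup{dense}(\mathbf P_n)\,\textup{size}_f(\mathbf P_n)\,\textup{energy}(\mathbf P_n) \lesssim d \cdot 2^{-n}s_0 \cdot \min\bigl(d^{-1}|G|,\ 2^{2n}s_0^{-2}\|f\|_2^2\bigr).\]

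Finally, I would sum over $n \ge 0$. The two branches of the minimum balance at the index $n_0$ determined by $2^{n_0} = \sqrt{d^{-1}|G|}\,s_0/\|f\|_2$. For $n \le n_0$ the $n$-th term reads $d \cdot 2^n\,s_0^{-1}\|f\|_2^2$ and sums geometrically to $\lesssim \sqrt{d|G|}\,\|f\|_2$; for $n > n_0$ it reads $2^{-n}s_0|G|$ and also sums to $\lesssim \sqrt{d|G|}\,\|f\|_2$. (If $n_0 < 0$, i.e.\ $s_0\sqrt{d^{-1}|G|} < \|f\|_2$, the entire range lies in the second case and the total is $\lesssim s_0|G|$, which in that regime is already dominated by the first argument of the claimed minimum.) I do not anticipate a real obstacle here beyond careful bookkeeping: the argument is a standard two-sided balancing of the available energy estimates, similar in spirit to the step described as the proof of Carleson's Theorem earlier in this section.
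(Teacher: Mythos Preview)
Your proof is correct and follows essentially the same strategy as the paper's: iterate the Size Lemma to stratify $\mathbf P$ by size, bound each stratum via the Tree Lemma using the better of the two available energy estimates, and sum the resulting geometric series. The only cosmetic difference is that the paper stops the Size Lemma iteration at the balancing index $n_0$ and handles the remaining piece in one shot via \eqref{e.basicAssumption}, whereas you iterate indefinitely and take the minimum of the two energy bounds at every level; the summations are identical.
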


\begin{proof} The first estimate follows immediately from assumptions and the Tree Lemma.  
So, we assume that the second term on the right in \eqref{e.moreEffect} is the smaller of the two. 
That is, we assume that 
we have the inequality 
\begin{equation*}
 \textup{size} _{f} (\mathbf P) ^{-2} \|  f\| _2 ^2    \le \delta ^{-1} |  G| 
\end{equation*}
where we set $  \textup{dense} (\mathbf P)= \delta $.  
The left hand side of the last display is exactly the estimate on Energy that we would get by application of the Size Lemma.  Hence, it is more 
efficient to apply the Size Lemma until the Energy estimate it provides matches the right hand side of \eqref{e.basicAssumption}. 

To be precise, set integer $ n_0$ to be the integer part of $-\log_2 [\delta \| f\|_{2} ^2 |  G| ^{-1}]   $. 
And write $ \mathbf P$ as the union of collections $ \mathbf P_n$, for $ n\le n_0$, where the collections $ \mathbf P_n$ satisfy 
\begin{enumerate}
\item $  \textup{dense} (\mathbf P_n) \le  \delta$; 
\item $  \textup{size} (\mathbf P _n) \le  2 ^{-n/2}$; 
\item $  \textup{energy} (\mathbf P_n) \lesssim  2 ^{n } \| f\|_{2} ^2 $. 
\end{enumerate}
This decomposition is obtained by solely applying the Size Lemma, until the last step when $n=n_0$, when the conclusions will follow from 
 the assumption \eqref{e.basicAssumption}.  We then have 
\begin{align*}
\bigl| B _{\mathbf P_n} (f,g)\bigr|  & \lesssim \delta  2 ^{n/2}\| f\|_{2}^2 \,,
\end{align*}
which is a geometric series which sums to its at most a constant times it largest term, for $ n=n_0$, yielding our Lemma. 
\end{proof}

\section{A Restricted Weak-Type Inequality} \label{s.rwt}

In this and the subsequent Sections, we shall fix a lacunary sequence $0<n_1<n_2<\dots<n_N$ of frequencies. All the implicit constants in the estimates will be independent of $N$, but depends on the lacunarity constant $\inf_j n_{j+1}/n_j > 1$.

Recall the definition of the Carleson operator $ \mathscr C$ in \eqref{e.carlesonOp}.  The observations of Remark~\ref{r.carlesonOp} will be in force, and we will use the notation for the bilinear form $ B _{\mathbf P}$  in \eqref{e.BP}.  

\begin{definition}\label{d.simple} To say that $ G' $ is a \emph{major subset of a set $ G $ } means that $ G'\subset G $ and $ |G'| \ge \frac 12 |G| $.  
\end{definition}

\begin{lemma}\label{l.FloglogGF} Let  $ F, G \subset [0,1] $. Then there is a major subset $G'$ of $G$ such that if  $ f $ is dominated by $ F $ and $ g $ is dominated by $ G' $ then
\begin{equation}\label{FloglogGF}
\bigl| \langle \mathscr C f, g \rangle\bigr| \lesssim |F| \log \log_+ (|G|/|F|)  \ \ .
\end{equation}
\end{lemma}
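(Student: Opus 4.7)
The plan is to prove this via the Walsh time-frequency machinery of Section~\ref{s.phasePlane}, combined with the multi-frequency Calder\'on-Zygmund decomposition of Nazarov-Oberlin-Thiele and Zygmund's improvement of Hausdorff-Young for lacunary sequences. Set $\delta := |F|/|G|$; we may assume $\delta$ is small, since otherwise \eqref{FloglogGF} follows directly from Carleson's Theorem~\ref{t.Carleson}.

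First I would select the major subset using the standard Calder\'on-Zygmund exceptional set,
\[
G' := G \setminus \{x \in [0,1] \;:\; M\mathbf 1_F(x) > C\delta\},
\]
for a sufficiently large absolute $C$; the weak-type $(1,1)$ bound for the maximal operator makes $G'$ major in $G$. Any contributing bi-tile $P$ then satisfies $I_P \cap G' \ne \emptyset$, and Lemma~\ref{l.upperSize} supplies the a priori bound $\textup{size}_f(\mathbf P) \lesssim \delta$. Next, I would iterate the Density Lemma to split $\mathbf P = \bigsqcup_{k\ge 0}\mathbf P^k$ with $\textup{dense}(\mathbf P^k) \lesssim \delta_k := 2^{-k}$ and $\textup{energy}(\mathbf P^k) \lesssim \delta_k^{-1}|G|$. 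Only the $K \sim \log(1/\delta)$ levels with $\delta_k \ge \delta$ require work; the tail $\delta_k < \delta$ sums to $O(|F|)$ via the second branch of \eqref{e.moreEffect}. Inserting $\textup{size}_f \lesssim \delta$ into the first branch of Lemma~\ref{l.moreEffect} gives the naive $|B_{\mathbf P^k}(\mathbf 1_F, g)| \lesssim \delta|G| = |F|$, which summed in $k$ loses a full logarithm compared to the target.

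The missing logarithm is recovered by applying, at each density level $k$, a multi-frequency Calder\'on-Zygmund decomposition $\mathbf 1_F = g^k + b^k$ with respect to the lacunary frequencies that index the relevant dyadic frequency windows in $\mathbf P^k$, at a height parameter $\alpha_k$ chosen proportional to $\delta/k$. For the good part, the crucial point is that Zygmund's inequality (Proposition~\ref{p.zyg}) supplies an $L^\infty$ bound on $g^k$ strictly smaller than what ordinary Hausdorff-Young would give; when this sharper bound is fed into the first branch of Lemma~\ref{l.moreEffect}, the result is $|B_{\mathbf P^k}(g^k, g)| \lesssim |F|/k$. The bad part $b^k$ is supported on a disjoint family of intervals of small total length, and by construction is orthogonal to the Walsh packets $w_{P_\ell}$ at all scales larger than its support whose frequency half contains one of the lacunary $n_j$. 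Splitting $\mathbf P^k$ into the sub-collection of tiles that sit inside the bad intervals (controlled directly by their total length) and the sub-collection that strictly contain them (where the cancellation is in force), and applying the Tree Lemma in each case, gives $|B_{\mathbf P^k}(b^k, g)| \lesssim |F|/k$ as well. Summing the geometric-harmonic series $\sum_{k=1}^{K} 1/k \lesssim \log K \sim \log\log(|G|/|F|)$ then yields \eqref{FloglogGF}.

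The main technical obstacle will be the bad-part bound, and in particular its book-keeping: a single bi-tile in $\mathbf P^k$ may interact simultaneously with many bad intervals and with several lacunary frequencies in its upper half, and the gain from the multi-frequency cancellation must survive these overlaps. Making this work requires the lacunarity hypothesis in an essential way, via the observation that any dyadic frequency window $[n 2^k,(n+1)2^k)$ with $n\ge 1$ contains only $O(1)$ elements of $\{n_j\}$ (with constant depending only on the lacunarity ratio), so that the effective multi-frequency count at each scale stays bounded and the Zygmund logarithm only charges for the \emph{total} number of scales actually in play.
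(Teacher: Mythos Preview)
Your setup---the choice of $G'$, the density decomposition into levels $\mathbf P^k$, and the tail estimate for $\delta_k<\delta$ via the second branch of \eqref{e.moreEffect}---matches the paper. The gap is in how you propose to harvest Zygmund's inequality on the main range. Proposition~\ref{p.zyg} controls the $\ell^2$ norm of lacunary Fourier coefficients by an $L(\log L)^{1/2}$ norm, and so what it upgrades is the \emph{$L^2$ norm of the projection} $\phi_I=\sum_{p\in\mathbf Q_I}\langle f,w_p\rangle w_p$ on each bad interval~$I$. It does not furnish an improved $L^\infty$ or size bound on a ``good part'' $g^k$, which is what feeding it into the \emph{first} branch of \eqref{e.moreEffect} would require; hence the claimed bound $|B_{\mathbf P^k}(g^k,g)|\lesssim |F|/k$ is unsupported, and the harmonic sum $\sum_{k\le K}1/k\sim\log K$ is not how the double logarithm appears. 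Running a separate multi-frequency decomposition at each level at heights $\alpha_k\propto\delta/k$ is also unworkable: the associated bad set then has measure $\sim |F|/\alpha_k=k|G|$, which already exceeds $1$ for moderate $k$, so the ``tiles inside the bad intervals'' cannot be controlled by their total length.

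The paper instead performs a \emph{single} Calder\'on--Zygmund stopping at height $\lambda\simeq\delta$, with maximal bad intervals $\mathbf I$. Because every contributing bi-tile has $I_P\cap G'\neq\emptyset$, one has $I_P\supsetneq I$ whenever $I_P$ meets $I\in\mathbf I$, and therefore $B_{\mathbf P_k}(f,g)=B_{\mathbf P_k}(\phi,g)$ outright, with $\phi=\sum_{I\in\mathbf I}\phi_I$; there is no residual bad part to estimate. Zygmund on each $I$ (after checking that the rescaled frequencies $\mu_p$ are themselves lacunary) yields $\|\phi_I\|_2\lesssim\lambda(\log_+1/\lambda)^{1/2}|I|^{1/2}$, hence $\|\phi\|_2\lesssim|F|\,|G|^{-1/2}(\log_+1/\lambda)^{1/2}$. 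The density range is then split at $k_0=C\log\log_+(1/\lambda)$: for $k\le k_0$ the first branch of \eqref{e.moreEffect} gives the trivial bound $|F|$ per level, contributing $|F|\,k_0$; for $k>k_0$ the \emph{second} branch gives $2^{-k/2}\sqrt{|G|}\,\|\phi\|_2\lesssim 2^{-k/2}|F|(\log_+1/\lambda)^{1/2}$, a geometric series that sums to $\lesssim|F|$ by the choice of $k_0$. The $\log\log$ factor is thus the \emph{number of trivial levels}, not a harmonic sum.
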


We recall the following key inequality of Zygmund \cite{MR1963498},  which can be viewed as an improvement of Hausdorff-Young's inequality in the lacunary setting.

\begin{proposition}\label{p.zyg} Let $ \{n_j \;:\; j\ge 1\} $ be a lacunary sequence of integers. We have the inequality 
\begin{equation*}
\bigl\|\{\widehat f (n_j) \;:\; j\ge 1 \}\bigr\|_{\ell^2 } \lesssim \| f\|_{L (\log L)^{1/2} } \, . 
\end{equation*}
\end{proposition}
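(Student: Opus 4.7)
The plan is to dualize to a subgaussian estimate for lacunary Walsh series and prove the dual via a martingale argument. The Orlicz function $\Phi(t)=t(\log_+t)^{1/2}$ has complementary Young function comparable at infinity to $\Psi(s)\sim e^{s^2}$, so $(L(\log L)^{1/2})^{\ast}\simeq \exp(L^2)$ in the sense of Luxembourg norms, up to universal constants. Via the pairing $\sum_j a_j\widehat f(n_j)=\int_0^1 f\,\bigl(\sum_j a_j W_{n_j}\bigr)\,dx$, the proposition is equivalent to the dual estimate
\begin{equation*}
\NOrm{\sum_{j\ge 1} a_j W_{n_j}}.{\exp(L^2)}. \;\lesssim\; \Norm{(a_j)}.{\ell^2}.
\end{equation*}
for finitely supported sequences $(a_j)$, which then extends to all $\ell^2$ sequences by density.

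To prove the subgaussian bound I exploit the Rademacher structure of the Walsh system. After partitioning $\{n_j\}$ into boundedly many subsequences (absorbing the loss into the implicit constant), I may assume the stronger lacunarity $n_{j+1}\ge 2n_j$, so that the leading bits $k_j:=\lfloor\log_2 n_j\rfloor$ are strictly increasing. Writing $n_j=2^{k_j}+m_j$ with $0\le m_j<2^{k_j}$ and setting $r_k(x):=\operatorname{sign}\sin(2^{k+1}\pi x)$, the multiplicativity of the Walsh basis yields the factorization $W_{n_j}=r_{k_j}\,W_{m_j}$, in which $W_{m_j}$ is measurable with respect to $\sigma(r_0,\ldots,r_{k_j-1})$. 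Setting $\mathcal F_j:=\sigma(r_0,\ldots,r_{k_j})$, the partial sums $S_N:=\sum_{j\le N} a_j W_{n_j}$ form an $(\mathcal F_j)$-adapted martingale with differences of absolute value exactly $|a_j|$: conditioning first on the $\sigma$-algebra generated by all Rademachers except $r_{k_{j+1}}$, the fresh Rademacher $r_{k_{j+1}}$ has zero conditional mean and is independent of $W_{m_{j+1}}$, so the tower property gives $E[a_{j+1}W_{n_{j+1}}\mid \mathcal F_j]=0$. Azuma--Hoeffding then delivers the Gaussian tail
\begin{equation*}
P(|S_N|>t)\;\le\;2\exp\bigl(-t^2/(2\|(a_j)\|_{\ell^2}^2)\bigr)
\end{equation*}
uniformly in $N$, which is precisely the claimed $\exp(L^2)$ bound.

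The main conceptual point is the choice of filtration: $W_{m_{j+1}}$ need not be $\mathcal F_j$-measurable, because it may involve Rademachers $r_k$ with $k_j<k<k_{j+1}$ that do not correspond to leading bits of any $n_i$. The martingale property must therefore be checked through an auxiliary conditioning on a $\sigma$-algebra larger than $\mathcal F_j$, rather than computed directly from $\mathcal F_j$. Once this structure is in place, the remainder is routine: Azuma--Hoeffding is standard for martingales with pointwise bounded differences, and the Orlicz duality identification of $(L(\log L)^{1/2})^{\ast}$ with $\exp(L^2)$ completes the argument; the precise exponent $1/2$ is forced by the $\sqrt p$-growth of the $L^p$ norms of subgaussian variables.
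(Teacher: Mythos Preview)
Your argument is correct and follows the same overall scheme as the paper: dualize via $(L(\log L)^{1/2})^\ast \simeq \exp(L^2)$ and reduce to the Khintchine-type estimate $\bigl\|\sum_j a_j W_{n_j}\bigr\|_{\exp(L^2)}\lesssim\|(a_j)\|_{\ell^2}$. Where you differ is in the proof of this last inequality. The paper does not prove it from scratch; it suggests either adapting the $L^p$ argument of Sagher--Zhou to obtain $\sqrt p$-growth of moments, or observing that the Haar--Littlewood--Paley square function of $\sum_j a_j W_{n_j}$ is bounded by $\|(a_j)\|_{\ell^2}$ in $L^\infty$ and invoking the Chang--Wilson--Wolff subgaussian inequality. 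Your route is a direct martingale argument: after thinning to force strictly increasing leading bits $k_j$, you set up the filtration $\mathcal F_j=\sigma(r_0,\dots,r_{k_j})$, verify the martingale property via the auxiliary conditioning on $\sigma(r_k:k\neq k_{j+1})$ (a genuine point, since $W_{m_{j+1}}$ need not be $\mathcal F_j$-measurable), and apply Azuma--Hoeffding to the differences $|a_j W_{n_j}|=|a_j|$.

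Your approach is in spirit the same as the paper's Chang--Wilson--Wolff suggestion---pointwise bounded martingale differences trivially give an $L^\infty$ bound on the square function---but Azuma--Hoeffding is the more elementary and self-contained tool here, requiring no appeal to \cite{MR800004} or \cite{MR1052010}. What the paper's formulation buys is that the square-function viewpoint generalizes cleanly (e.g.\ to the trigonometric setting or to less rigid frequency structures), whereas your argument leans on the exact product structure $W_{n_j}=r_{k_j}W_{m_j}$ and the independence of the Rademachers, which is specific to the Walsh system.
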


We only indicate the proof here.  
The dual space of $ L \sqrt {\log L} $ is the space $ \operatorname {exp}(L ^2 )$, which is the Orlicz space 
associated with the Orlicz function $ \operatorname e ^{x ^2 } -1 $.  Moreover, there is a version of the Khintchine inequality 
which holds for the Walsh-Paley functions $ \{W_{n_j} \;:\; j\ge 1\}$, which is phrased in terms of the $  \operatorname {exp}(L ^2 )$ 
norm.  Namely,
\begin{equation}\label{e.khintchine}
\Bigl\| \sum_{j=1} ^{\infty } a_j W_{n_j} (x)  \Bigr\|_{ \operatorname {exp}(L ^2 )} 
\lesssim \|  \{ a_j \;:\; j\ge 1\} \|_{ \ell ^2 } \,. 
\end{equation}

We can then prove the Zygmund inequality as follows.  For $ f\in L (\log L) ^{1/2}$, let $ \phi = \sum_{j=1} ^{\infty } 
\widehat f (n_j) W_{n_j} $ be the projection of $ f $ onto the lacunary frequencies, and observe that 
\begin{align*}
\bigl\|\{\widehat f (n_j) \;:\; j\ge 1 \}\bigr\|_{\ell^2 }  ^2 
&=  \langle  f, \phi  \rangle 
\\
& \le \| f\|_{ L (\log L) ^{1/2} } \| \phi \|_{ \operatorname {exp}(L ^2 )} 
\\
& \lesssim   \| f\|_{ L (\log L) ^{1/2} }  \cdot 
\bigl\|\{\widehat f (n_j) \;:\; j\ge 1 \}\bigr\|_{\ell^2 }\,. 
\end{align*}
And this completes the proof.  The reader can compare this argument to \cite{MR1963498}*{Ch. XII, 7.6}. 

Concerning \eqref{e.khintchine}, we are sure that this is known, but could not locate an explicit reference to it in the literature. 
One can modify the  argument in \cite{MR1052010} to show the equivalent form of \eqref{e.khintchine} 
\begin{equation*}
\Bigl\| \sum_{j=1} ^{\infty } a_j W_{n_j} (x)  \Bigr\|_p
\le C \sqrt p \,  \|  \{ a_j \;:\; j\ge 1\} \|_{ \ell ^2 } \,, \qquad 1< p < \infty \,.
\end{equation*}
Alternatively, one could show that the Haar Littlewood-Paley Square Function of $ \sum_{j=1} ^{\infty } a_j W_{n_j} $ 
has $ L ^{\infty }$-norm at most $ C \|  \{ a_j \;:\; j\ge 1\} \|_{ \ell ^2 }$, and then appeal to the Chang-Wilson-Wolff  inequality, 
see \cite{MR800004}.

\begin{proof}[Proof of Lemma~\ref{l.FloglogGF}.]

Clearly if $|G|\lesssim |F|$ then the desired estimate follows from $L^2$ boundedness of the (lacunary) Carleson operator. We'll assume below that $|F| \le C_0 |G|$ for some absolute constant $C_0$.

We'll take $G' = G\setminus \{M1_F > \lambda\}$ where we choose $\lambda \simeq |G|^{-1} |F|$ 
so that $G'$ is a major subset of $G$.  Furthermore, we can choose $C_0$ small enough such that $\lambda<1$, and it is then not hard to see that $f$ is supported inside $\{M1_F>\lambda\}$.  Below we show that this choice of $G'$ works.  Let $ \mathbf I$ be the maximal dyadic intervals 
$ I\subset \{M \mathbf 1_{F} > \lambda \}$.  We then have $ \lvert  F \cap I\rvert \lesssim \lambda \lvert  I\rvert  $ for $ I\in \mathbf I$, 
and 
\begin{equation}
 \label{e.sumI}
\sum_{I \in \mathbf I} |I| \le |\{ M 1_F > \lambda\}| \lesssim  \lambda^{-1} |F| \ \ .
\end{equation}

Let $ \mathbf P$ be those bi-tiles $ P$ with $I_{P _{\ell }} \cap F \ne \emptyset$ and $I_{P_{\ell }}  \cap G' \ne \emptyset$. It is clear that
$\< \mathscr Cf, g\> = \mathbf B_\P(f, g)$.  
We then decompose $ \mathbf P = \bigcup_{k \ge 0} \mathbf P_k $,  using only the Density Lemma, see Lemma~\ref{l.dense}. 
Thus, $ \mathbf P_k $ is  a union of trees $ T $ in collection $ \T_k $, so that
$\density(\P_k) \lesssim 2^{-k}$,  and the estimate  (2) of the Density Lemma holds,  namely 
\begin{equation}\label{L1-count}
\textup{energy} (\mathbf P_k)  \le  \sum_{T\in \T_k} \lvert{I_T} \rvert  \lesssim   2^{k} |G|   \,. 
\end{equation}
It follows from \eqref{e.upperSize} and the first half of the estimate \eqref{e.moreEffect}, that we have 
\begin{equation*}
\bigl\lvert B _{\mathbf P_k} (f,g) \bigr\rvert \lesssim \lambda \lvert G\rvert  \lesssim 
\lvert  F\rvert \,, \qquad k\ge 1\,.  
\end{equation*}
We will use this estimate for $ 1 \le k \le  k_0=C \log\log_+ \frac 1{\lambda}$, which is consistent with the estimate we want to prove.   

\medskip 

We begin the multi-frequency part of the proof.  
Fix $k> k_0$. 
For convenience we suppress the dependence on $k$ in the following estimate of $B_{\P_k}(f,g)$, except for $\P_k$.  
For $ I\in \mathbf I$, let $ \mathbf Q_I$ be those tiles $ p$ with time interval $ I$, which as rectangles in the phase plane 
intersect the lower-half of some bi-tile $ P\in \mathbf P_k$.  Take any such pair $(p,P)$. By the construction of $ \mathbf P_k$ and $ \mathbf I$, 
it follows that we must have $ p < P _{\ell }$.  The inequality between $ p$ and the lower part of $ P$ must be strict, hence we must have $ p < P _{u}$.  
 Furthermore, using standard properties of Walsh packets, it follows that $w_{P_{\ell}}$ is a scalar multiple of $w_p$   on $ I _{P _{\ell }}$.  

We set $ \phi _I$ to be the projection of $ f$ onto the space spanned by 
the wave-packets $ \{w_p \;:\; p\in \mathbf Q_I\}$, and set $ \phi = \sum_{I\in \mathbf I} \phi _I$.    This implies that for any $P\in \P_k$ and any $I\in \I$ we have
$$\<f1_I -\phi_I, w_{P_\ell}\> = 0 \ \ .$$
Indeed, since $\phi_I$ is supported on $I$, we can assume $I_{P_\ell} \cap I \ne\emptyset$. Then there will be an element $p \in \Q_I$ such that $p \cap P_\ell \ne\emptyset$, and so we can replace $w_{P_\ell}\mathbf 1_I$ by a multiple $w_p$, and the desired equality follows.

Since $f$ is supported inside the union of intervals in $\I$, it then follows that we have $ B _{\mathbf P_k} (f,g)=B _{\mathbf P_k} (\phi ,g) $, and 
our objective is to use the Zygmund inequality to provide a favorable estimate for the $ L ^2 $-norm of $ \phi $.  

We check that the Zygmund inequality applies to the tiles in $ \mathbf Q_I$. Let $ \alpha >1$ be the lacunarity constant of the 
sequence $ \{n_j \;:\; j\ge 1\}$. For a tile $ p$ in this collection, write 
the frequency interval of $ p$ as $[\mu _p, \mu _p+1] \lvert  I\rvert ^{-1} $. 
Taking a different $ p' \in \mathbf Q _I$, with $ n _{j (p')}> n _{j (p)}   $, we have 
\begin{align*}
\frac { \mu _{p'}} {\mu _{p}} 
\ge  \frac { n _{j (p')} - \lvert  I\rvert ^{-1}   } { n_{j (p)}}  
\ge \alpha - \frac { \lvert  I\rvert ^{-1}   } { n_{j (p)}} \,. 
 \end{align*}
Note that except for the first $1+[ \frac 2{\alpha-1}]$ tiles, we will have $ n _{j (p)} \ge (1+[\frac 2{\alpha-1}])\lvert  I\rvert ^{-1}  > \frac 2{\alpha-1}\lvert  I\rvert ^{-1}$. 
 Hence, after at most $ O _{\alpha } (1)$ initial terms the sequence $ \{ \mu _{p} \;:\; p\in \mathbf Q_I\}$ has lacunarity constant 
at least $ (\alpha +1)/2$, and so the sequence has a lacunarity constant that is only dependent on $ \alpha $.

We estimate as below, where we will be using the 
Zygmund inequality, which requires an appropriate renormalization of the interval $ I$.  
\begin{align*}
\lVert \phi _I\rVert_{2}  & = \lVert  \{ \langle f, w_p \rangle \;:\; p\in \mathbf Q_I\}\rVert_{ \ell ^2 } 
\\
& \lesssim \lVert f \mathbf 1_{I}\rVert_{ L (\log L) ^{1/2} (I)} \lvert  I\rvert ^{1/2}  
\\  
& {\lesssim \frac{|F\cap I|}{|I|} \bigl(\log_+ \frac{|I|}{|F\cap I|} \bigr) ^{1/2} \lvert  I\rvert ^{1/2}}  
\\
& \lesssim \lambda \bigl(\log_+  \frac 1{\lambda} \bigr) ^{1/2} \lvert  I\rvert ^{1/2}   \ \ .
\end{align*}
It follows from \eqref{e.sumI} and $ \lambda \simeq \lvert  F\rvert\cdot \lvert  G\rvert ^{-1}$ that 
\begin{equation*}
\lVert \phi \rVert_{2} \lesssim \lambda \bigl(\log_+  \frac 1{\lambda} \bigr) ^{1/2} \lvert \{M \mathbf 1_{F} > \lambda \}\rvert ^{1/2} \lesssim \lvert  F\rvert   
\cdot \lvert  G\rvert ^{-1/2}  \bigl(\log_+  \frac 1{\lambda}  \bigr) ^{1/2} \,.  
\end{equation*}
    
We now turn to the second half of the estimate \eqref{e.moreEffect} to see that 
\begin{align*} 
\bigl\lvert B _{\mathbf P_k} (f,g)\bigr\rvert 
&= \bigl\lvert B _{\mathbf P_k} (\phi ,g)\bigr\rvert 
\\& 
\lesssim 2 ^{-k/2} \sqrt {\lvert  G\rvert } \cdot  \lVert \phi \rVert_{2} 
\lesssim 2 ^{-k/2} \lvert  F\rvert  \bigl(\log_+  \frac 1{\lambda}  \bigr) ^{1/2} \,.  
\end{align*}
By choice of $ k_0 = C \log\log_+ (\lvert  G\rvert/ \lvert  F\rvert)  $, we can sum this estimate in $ k\ge k_0$ to conclude the proof 
of the Lemma. 
\end{proof}

\begin{remark}\label{r.fromOT} 
Some of the arguments in the proof above we have learned from the last two pages of 
\cite{2010arXiv1004.4019O}, and specialized to the lacunary setting.  
\end{remark}

\section{Proof of the Strong-Type Estimate (\ref{e.LlogL})} \label{s.strongtype-proof}
 
We turn to the proof of the strong-type estimate that $ \mathscr C$ maps $ L \log L (\log\log L)$ into $ L ^{1}$.  
The intermediate inequality we prove is this: 
For any $F\subset [0,1] $ and function $ f $ dominated by $ F $ we have
\begin{align}  \label{e.L1F}
\| \mathscr Cf\|_1 &\lesssim |F| (\log_+ |F|^{-1} ) (\log \log_+|F|^{-1}) \ \ .
\end{align} 

The main idea of the proof is the following principle: one can pass  from a restricted weak-type inequality, together with the $L^2$ estimate, to strong-type inequality with a loss of a $\log$ term. Specifically, by Lemma~\ref{l.FloglogGF}, for any set $ G\subset [0,1] $, there is a major subset $ G'\subset G $ so that for 
any $ g $ dominated by $ G' $, 
\begin{equation} \label{e.iterate}
|\langle \mathscr C f, g\rangle| \lesssim |F|  (\log \log_+|F|^{-1})  \, . 
\end{equation}
We apply \eqref{e.iterate} to the set $ G_0=[0,1] $, getting major subset $ G'_0 $, and then recursively apply the inequality to 
 $ G_1 = G_0 \backslash G'_0 $. After a number $t_0$ of steps, we will have $ | G_{t_0}| \le |F| $, at which point we stop the recursion, and set $ G'_{t_0} = G_{t_0} $. It is not hard to see that we can take
$$ t_0 = 2+ \lceil\log_2 |F|^{-1}\rceil \ \ .$$
Now, for appropriately chosen functions $ g_t $ dominated by $ G'_t $, for $ 0\le t\le t_0 $, we have 
\begin{align*}
 \| \mathscr C f\|_{ 1} & \le 
 \sum_{t=0}^{t_0} \langle \mathscr C f, g_t \rangle 
\\& \lesssim \sum_{t=0}^{t_0} |F|  (\log \log_+|F|^{-1})
\\& \lesssim |F| (\log_+ |F|^{-1} ) (\log\log_+|F|^{-1} ) \, . 
\end{align*}
We use \eqref{e.iterate} for $ 0\le t < t_0 $, and for the last term, we simply use the $ L^2 $ inequality for $ \mathscr C f $. 

\smallskip 
To conclude the inequality \eqref{e.LlogL} from the intermediate \eqref{e.L1F}, one relies upon the fact that an arbitrary 
$ \phi  \in L \log L (\log\log L)$ is a convex combination of functions of the form 
\begin{equation*}
f \cdot  [ \lvert  F\rvert \log_+ \lvert  F\rvert ^{-1}  (\log \log_+ \lvert  F\rvert ^{-1}  )  ] ^{-1}
\end{equation*}
where $ f$ is dominated by $ F$.  We omit the straight-forward proof of this fact.  The reader  should be well-aware that the same comments  do not hold  for the weak-type estimate, which is the focus of the next section.  


\section{A Sj\"olin-type distributional estimate}

Much of the remaining arguments needed to conclude the weak-type estimate \eqref{e.LloglogL-weak} are derived from observations 
brought to bear on the question of the convergence of the Fourier sums along the full sequence.  Here, and in the remainder 
of the paper, we let $ \mathscr C _{\textup{full}} f = \sup _{n\ge 1} \lvert  S_n f\rvert $.  The estimate of Sj\"olin \cite{MR0336222} is 
\begin{equation*}
(\mathscr C _{\textup{full}} \mathbf 1_{F})^{\ast}(t) \lesssim \frac{|F|}{t}  \log_+ (\frac{t}{|F|})\,, \qquad F\subset [0,1]\,,\ t>0\,. 
\end{equation*}
We denote the  decreasing rearrangement invariant function $h^{\ast}$ of function $ h$ by 
\begin{equation}\label{e.**}
h^{\ast}(t) = \inf \{s\ge 0: |\{x:|h(x)|>s\}| \le t\} \ \ .
\end{equation}
Our purpose here is to  establish the lacunary version of this inequality, and then  use an observation of Antonov \cite{MR1407066} to 
obtain a particular extension of this result.

\begin{lemma}\label{l.distribution-restricted} For any $t>0$ and any $f$ majorized by $F \subset [0,1]$, we have
\begin{equation*}
(\mathscr C _{\textup{lac}} {f} )^{\ast}(t) \lesssim \frac{|F|}{t} \log\log_+ \bigl(\frac{t}{|F|}\bigr)\,, \qquad F\subset [0,1]\,,\ t>0\,. 
\end{equation*}
Here and below we set $  \mathscr C _{\textup{lac}} f = \sup _{j\ge 1} \lvert  S_{n_j} f\rvert $.
\end{lemma}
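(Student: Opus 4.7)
The plan is to extract the distributional estimate directly from the restricted weak-type bound Lemma~\ref{l.FloglogGF}, via the standard super-level-set argument, after first linearizing the maximal operator $\mathscr C_{\textup{lac}}$ into the form \eqref{e.carlesonOp}. Fix $s>0$ and set $G = \{x\in[0,1]\,:\, \mathscr C_{\textup{lac}} f(x) > s\}$. Choose the linearizing function $N(x)\in\{n_j\}$ so that $|\mathscr Cf(x)| = \mathscr C_{\textup{lac}} f(x)$ on $G$; this is permitted by Remark~\ref{r.carlesonOp}. Apply Lemma~\ref{l.FloglogGF} to the pair $(F, G)$ to produce a major subset $G'\subset G$ with $|G'|\ge |G|/2$, and pair the bilinear form against $g := \operatorname{sign}(\mathscr Cf)\,\mathbf 1_{G'}$, which is dominated by $G'$. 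Since $|\mathscr Cf|>s$ pointwise on $G'$, this yields the implicit inequality
$$\tfrac{s}{2}\,|G|\;\le\; s\,|G'|\;\le\; \int_{G'} |\mathscr Cf|\,dx \;=\; \bigl|\langle \mathscr Cf, g\rangle\bigr| \;\lesssim\; |F|\,\log\log_+\!\bigl(|G|/|F|\bigr).$$

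Solving for $|G|$ is then a short calculus step. Writing $u = |G|/|F|$, the bound reads $s\,u \lesssim \log\log_+(u)$; because $\log\log_+$ grows slower than any positive power, a brief monotonicity/iteration argument yields either $u \lesssim 1$ or $u \lesssim s^{-1}\log\log_+(s^{-1})$. Translating back and setting $t = |G|$, this gives $s \lesssim \tfrac{|F|}{t}\log\log_+(t/|F|)$ for all $t \gtrsim |F|$, since the double logarithm is so slowly varying that $\log\log_+(s^{-1})$ and $\log\log_+(t/|F|)$ agree to an absolute constant along the implicitly defined curve. For the complementary range $t \lesssim |F|$ the factor $\log\log_+(t/|F|)$ is just the constant $\log 27$, and the desired bound $s \lesssim |F|/t$ follows from Chebyshev applied to the $L^2$ estimate on $\mathscr C$: since $\|f\|_2^2 \le |F|$, one has $|G| \le s^{-2}|F|$, whence $s \lesssim (|F|/t)^{1/2} \le |F|/t$.

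The only technical nuisance in this scheme is the algebraic inversion step, specifically showing that passing from $|G| \lesssim |F|s^{-1}\log\log_+(s^{-1})$ to $s \lesssim |F|t^{-1}\log\log_+(t/|F|)$ costs only an absolute constant. Here the double logarithm is genuinely convenient: its slow variation absorbs the corrections introduced by the rough inversion $s \leftrightarrow |F|/t$, whereas a single-logarithm bound would require a more delicate iteration. All of the real analytic content has already been packaged into Lemma~\ref{l.FloglogGF}; what remains is essentially the conversion between a restricted weak-type statement and its companion distributional estimate.
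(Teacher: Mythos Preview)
Your proof is correct and follows essentially the same route as the paper's: define the super-level set $G_s$, apply Lemma~\ref{l.FloglogGF} to the pair $(F,G_s)$ with $g$ chosen to extract $s|G_s|$ from the bilinear form, and then invert the resulting implicit inequality. The only cosmetic difference is in the inversion step: the paper observes directly that $x\mapsto x\log\log_+(1/x)$ is strictly increasing and reads off $|G_s|\le t$ in one line (so no separate $L^2$ case for small $t$ is needed), whereas you route through an iteration/slow-variation argument.
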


\begin{proof}
For any $s>0$, let $G_s = \{\mathscr Cf(x) > s\}$. By Lemma~\ref{l.FloglogGF}, there exists a major set $G'_s$ such that for
appropriate $ g$  majorized by $G'_s$ we have
\begin{align*}
s|G_s| \le 2 s \lvert  G'_s\rvert & \lesssim \langle \mathscr C f, g \rangle 
\lesssim \lvert  F\rvert  \log\log_+ (|G_s|/|F|)  
\end{align*}
or equivalently
$s \lesssim (|F|/|G_s|) \log \log_+ (|G_s|/|F|)$.  
Therefore if $s \ge C\frac{|F|}{t} \log \log_+ (\frac{t}{|F|})$ for some large absolute constant $C$ we'll have
$$\frac{|F|}{t} \log \log_+ \frac{t}{|F|} \le \frac{|F|}{|G_s|} \log \log_+  \frac{|G_s|}{|F|} \ \ .$$
So by the strictly increasing property of $s\log \log_+(1/s)$, we obtain
$$|F|/t \le |F|/|G_s| \ \ ,$$
or equivalently
$|G_s| \le t$. 
This completes the proof of the Lemma.
\end{proof}

Now, we will use a key observation of Antonov \cite{MR1407066} to remove the restricted-type assumption in Lemma~\ref{l.distribution-restricted}. Unlike previous Lemmas where the bi-tiles in the definition of $\mathscr C$ could be arbitrary, in the following Lemma (and hence subsequent Lemmas) we need to know that $\mathscr C$ is the actual discretization of the lacunary maximal Carleson operator used in the Lemma above. 
The observation of Antonov is the following Lemma.

\begin{lemma}\label{l.antonov} For $M \in \mathbb N$, set $ S ^{M} f = \sup _{1\le n \le M} |S_n f| $. 
For every $\epsilon >0$, and function $0\le f \le 1 $ supported in $[0,1]$, there is a set $ F \subset [0,1]$ with $ \| f\|_{1}= |F| $ and moreover 
$ \| S^{M} (f - \mathbf 1_{F})\|_{\infty } < \epsilon $. 
\end{lemma}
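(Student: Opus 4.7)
The plan is to exploit a special feature of the Walsh setting: every partial sum operator $S_n$ with $n\le M$ depends only on the functions $W_0,\ldots,W_n$, all of which are constant on sufficiently fine dyadic intervals. Consequently, I can locally replace $f$ by a characteristic function matching $\int f$ on each such interval, making $f-\mathbf{1}_F$ exactly orthogonal to $W_0,\ldots,W_M$, rather than merely $\epsilon$-close.

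First I would choose an integer $k$ with $2^k > M$. For any $m\le M$, the defining product for $W_m$ only involves factors $\mathrm{sign}\,\sin(2^{i+1}\pi x)$ with $i < k$, each of which is constant on dyadic intervals of length $2^{-i-1} \ge 2^{-k}$. Hence every $W_m$ with $m\le M$ is constant on each dyadic interval $I_j = [j\,2^{-k},(j+1)2^{-k})$, $j=0,\ldots,2^k-1$.

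Next I define $F$ interval by interval. Since $0\le f\le 1$, we have $a_j := \int_{I_j} f\,dx \in [0,|I_j|]$. Take $F\cap I_j$ to be the subinterval $[j\,2^{-k},\ j\,2^{-k} + a_j)$, so that $|F\cap I_j| = a_j = \int_{I_j} f$; summing over $j$ yields $|F| = \|f\|_1$ as required.

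Finally, for each $m\le M$, writing $W_m|_{I_j} = c_{j,m}$,
\[
\widehat{(f-\mathbf{1}_F)}(m) \;=\; \sum_{j=0}^{2^k-1} c_{j,m}\int_{I_j}(f-\mathbf{1}_F)\,dx \;=\; 0,
\]
so $S_n(f-\mathbf{1}_F) \equiv 0$ for all $n\le M$, giving $\|S^M(f-\mathbf{1}_F)\|_\infty = 0 < \epsilon$. There is no real obstacle; the entire argument is the pairing of local constancy of Walsh packets at scale $2^{-k}$ with local $L^1$-matching of $f$ by a characteristic set. Antonov's original argument in the trigonometric case genuinely needed the tolerance $\epsilon$ to handle the non-local character of $\sin$ and $\cos$; in the Walsh-Paley setting the algebraic structure upgrades the approximation to an identity, so the $\epsilon$ in the statement is vestigial.
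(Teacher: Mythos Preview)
Your proof is correct; the paper itself does not prove this lemma but only cites Sj\"olin--Soria \cite{MR2014553} for a proof in the Walsh--Fourier setting. Your argument---choosing $2^k>M$ so that $W_0,\ldots,W_M$ are constant on dyadic intervals of length $2^{-k}$ and then matching $\int_{I_j}f$ exactly by a subinterval of $I_j$---is the natural one in the Walsh setting and is essentially what one finds in that reference, including the observation that one actually gets $S^M(f-\mathbf 1_F)\equiv 0$ rather than merely $<\epsilon$.
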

A proof of Antonov's lemma in the Walsh-Fourier setting could be found in Sj\"olin-Soria \cite{MR2014553}.

Using Antonov's observation, have there holds the  following. For  any $t>0$, we have
\begin{align}\label{e.distribution-L1}
(\mathscr C_{\textup{lac}}f)^{\ast}(t) \lesssim \frac{\|f\|_1}{t} \log\log_+ \bigl(\frac{t}{\|f\|_1}\bigr)\,, \qquad  0\le \lvert  f\rvert \le 1\,.  
\\  \label{e.SJO}
(\mathscr C_{\textup{full}}f)^{\ast}(t) \lesssim \frac{\|f\|_1}{t} \log_+ \bigl(\frac{t}{\|f\|_1}\bigr)\,, \qquad  0\le \lvert  f\rvert \le 1\,.  
\end{align}

\section{Proof of the Weak-Type Estimate (\ref{e.LloglogL-weak})}\label{s.weaktype-proof} 

Antonov~\cite{MR1407066} used \eqref{e.SJO} to derive his conclusion that $ \mathscr C _{\textup{full}}$ maps 
$ L \log L \log\log\log_{+}L$ into $ L ^{1,\infty }$, which remains the best known result for the full sequence of integers.  
His argument was further generalized by Arias-de-Reyna \cite{MR1875141}, which language was phrased in that of 
interpolation and extrapolation theory.  The latter approach has been revisited by others, with the relevant point for us 
that the starting point is the distributional inequality \eqref{e.SJO}, or more generally something of the broad form of 
\eqref{e.SJO} or \eqref{e.distribution-L1}.  In our setting, we are fortunate that the investigations of  Carro-Martin \cite{MR2076775}
are nicely suited to derive our weak-type estimate.  

We will be a little brief about this, as the Theorems of Carro-Martin apply in an uncomplicated fashion.  
The extrapolation theory of Carro--Mart\'in starts with  a sublinear operator $T$ such that for any $f \in L^1 \cap L^\infty [0,1]$ with $\|f\|_{\infty} \le 1$ we have, using the notation of \eqref{e.**}, 
\begin{align}\label{e.factorization-Carro-Martin}
(Tf)^{\ast}(t) \lesssim D(\|f\|_1) R(t) \ \ .
\end{align}
Then under mild assumptions on $D$ and $R$, Carro and Martin shows that $T$  is bounded from a logarithmic type space $Q_D$ to a weighted Lorentz space $M_R$. 

For convenience we shall refer to those functions $f \in L^1 \cap L^\infty [0,1]$ with $\|f\|_{\infty} \le 1$ as atoms. 

\begin{definition}\label{d.QD} Let $D:(0,\infty)\to (0,\infty)$ be a concave function such that $D(0+)=0$. Then $Q_D$ is the space of functions $f$ such that there exists a decomposition of $f$ 
$$f=\sum_k a_k f_k \ \ , \ \ a_k \ge 0 \ \ ,$$
where $ \{f_k\}$ are atoms,   
and a scalar partition of unity $\sum_k b_k =1$ (with $b_k \ge 0$) such that the following sum is finite:
$$\sum_k a_k D(\|f_k\|_1) \bigl(1+\log \frac 1 {b_k}\bigr) < \infty \ \ .$$
The infimum of all such sums is denoted by $\|f\|_{Q_D}$.
\end{definition}

\begin{definition}\label{d.MR} Let $R:(0,\infty) \to (0,\infty)$. Then $M_R$ is the space of functions $f$ such that
$$\|f\|_{M_R}:= \sup_{t>0} \frac{f^{\ast}(t)}{R(t)} < \infty \ \ .$$
In particular, when $R(t)=1/t$ the space $M_R$ becomes the usual $L^{1,\infty}$. 
\end{definition}

In the following theorems, extracted from \cite{MR2076775}, we assume that $D, R$ are respectively eligible for the above definitions.

\begin{theorem}\cite{MR2076775}*{Theorem 2.1} Assume that $T$ is a sublinear operator such that for any atomic $f$ and any $t > 0$ we have
$$(Tf)^{\ast}(t) \lesssim D(\|f\|_1) R(t) \ \ .$$
Assume that $tR(t)$ is a nondecreasing function. Then $T$ is bounded from $Q_D$ to $M_R$.
\end{theorem}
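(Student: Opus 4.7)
The strategy is the standard extrapolation argument: an atomic decomposition of $f \in Q_D$, combined with sublinearity of $T$, the elementary rearrangement inequality for sums, and the hypothesized atomic distributional estimate.

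Fix $f \in Q_D$ and $\varepsilon>0$, and select a decomposition $f = \sum_k a_k f_k$ with atoms $f_k$ and partition of unity $\sum_k b_k = 1$ realizing
\[
\sum_k a_k D(\|f_k\|_1)\bigl(1+\log(1/b_k)\bigr) \le (1+\varepsilon)\|f\|_{Q_D}.
\]
Sublinearity of $T$ gives $|Tf|\le\sum_k a_k|Tf_k|$ a.e., and the elementary bound $\bigl(\sum_k g_k\bigr)^{\ast}\bigl(\sum_k t_k\bigr)\le\sum_k g_k^{\ast}(t_k)$ applied with the partition $t=\sum_k b_k t$ yields
\[
(Tf)^{\ast}(t) \le \sum_k a_k (Tf_k)^{\ast}(b_k t) \lesssim \sum_k a_k D(\|f_k\|_1)\, R(b_k t),
\]
where the last inequality is the hypothesis of the theorem.

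The remaining and main step is to trade the factor $R(b_k t)$ for $R(t)\bigl(1+\log(1/b_k)\bigr)$. The direct use of the monotonicity of $sR(s)$ gives only the polynomial bound $R(b_k t)\le R(t)/b_k$, which corresponds to the much cruder weight $1/b_k$ in place of the logarithmic weight entering $\|f\|_{Q_D}$; this is precisely the main obstacle. The Carro--Mart\'in argument closes the gap via a sub-atomic refinement: each atom $f_k$ is further decomposed into $O(\log(1/b_k))$ sub-atoms at dyadically decreasing mass levels, the hypothesis is applied to each sub-atom, and the resulting geometric series is telescoped to a single $R(t)$ factor via the monotonicity of $sR(s)$. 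The net cost of this refinement is exactly the $1+\log(1/b_k)$ weight, which now matches the definition of the $Q_D$ norm. Summing over $k$ and taking the infimum over admissible decompositions then yields $\|Tf\|_{M_R}\lesssim\|f\|_{Q_D}$, as required.
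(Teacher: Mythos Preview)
The paper does not prove this theorem; it is quoted from Carro--Mart\'in \cite{MR2076775} (their Theorem~2.1) and used as a black box in \S\ref{s.weaktype-proof}. There is therefore no proof in the present paper to compare your attempt against.

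On the merits of your sketch: the opening moves---atomic decomposition, sublinearity, the rearrangement subadditivity $(\sum_k g_k)^\ast(\sum_k t_k)\le\sum_k g_k^\ast(t_k)$, and an appeal to the hypothesis---are correct and are how the argument in \cite{MR2076775} begins. You also correctly isolate the crux: the naive bound $R(b_kt)\le R(t)/b_k$ (from monotonicity of $sR(s)$) loses a full power of $b_k$ where only a logarithm is tolerable.

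Your proposed fix, however, does not close as written. If one splits an atom $f_k$ into sub-atoms $f_{k,j}$ and reapplies the hypothesis, one obtains terms $D(\|f_{k,j}\|_1)\,R(c_{k,j}t)$ with $\sum_j c_{k,j}=b_k$, hence $c_{k,j}\le b_k$. Concavity of $D$ with $D(0)=0$ gives $D(\lambda s)\ge\lambda D(s)$, so passing to smaller masses does not make the $D$-factor decay geometrically; meanwhile the $R$-factor, evaluated at even smaller arguments $c_{k,j}t$, can only grow. There is no geometric series to telescope, and the claimed $1+\log(1/b_k)$ weight does not emerge from this route. In \cite{MR2076775} the logarithm is produced on the \emph{output} side, via a Stein--Weiss/Kalton-type quasi-triangle inequality for the quasi-norm $\|\cdot\|_{M_R}$: from $h_k^\ast(s)\le A_kR(s)$ and the monotonicity of $sR(s)$ one deduces $(\sum_k h_k)^\ast(t)\lesssim R(t)\sum_k A_k(1+\log(1/b_k))$ for any partition of unity $(b_k)$. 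Applying this with $h_k=a_k|Tf_k|$ and $A_k=a_kD(\|f_k\|_1)$ gives the theorem immediately. The summation lemma itself is proved by truncating each $h_k$ at a height depending on $b_k$ and integrating the tail---a manipulation on $Tf_k$, not a further decomposition of $f_k$.
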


Next, the space $ Q_D$ and Orlicz spaces are related.  

\begin{definition}\label{d.LlogloglogLD} Let $D:(0,\infty)\to (0,\infty)$ be a concave increasing function such that $D(0+)=0$. Then $L\log\log\log_{+}L(D)$ is the space of functions $f$ such that
$$\|f\|_{L \log\log\log_{+}L (D)}:= \int_0^1 f^{\ast}(t) \log\log\log_+\frac 1t \;  dD(t) < \infty \ \ .$$
\end{definition}

\begin{theorem}\cite{MR2076775}*{Theorem 2.2(3)} Assume $D$ is also increasing. If $D(s) \gtrsim s$ for any $s>0$, and $D(s^2) \lesssim sD(s)$ for any $0 < s \le 1$, then
$$L \log \log \log L (D) \subset Q_D \ \ .$$
\end{theorem}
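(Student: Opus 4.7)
The plan is to construct, for any $f \in L\log\log\log_+L(D)$, an explicit atomic decomposition $f = \sum_k a_k f_k$ together with a scalar partition of unity $\{b_k\}$ witnessing $f \in Q_D$, with $\|f\|_{Q_D} \lesssim \|f\|_{L\log\log\log_+L(D)}$. First I would reduce to $f \ge 0$ (by splitting into positive and negative parts) and pass to the decreasing rearrangement $f^*$ on $(0,1]$, since both spaces are rearrangement-invariant. The natural candidate is the dyadic layer-cake decomposition
\[
f^* \;\asymp\; \sum_{k\in\mathbb Z} 2^k \mathbf 1_{\{f^* > 2^k\}},
\]
so I would take $a_k = 2^k$ and $f_k = \mathbf 1_{\{f^* > 2^k\}}$; each $f_k$ is an atom with $\|f_k\|_1 = \mu_k := |\{f^* > 2^k\}|$, and the bounded tail $\{f^* \le 1\}$ absorbs into one further atom. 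The whole question reduces to choosing $\{b_k\}$ with $\sum_k b_k = 1$ such that
\[
\sum_k 2^k D(\mu_k)\bigl(1+\log(1/b_k)\bigr) \;\lesssim\; \int_0^1 f^*(t) \log\log\log_+(1/t)\, dD(t).
\]

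The right target for $\log(1/b_k)$ is dictated by an Abel-summation identification of the right-hand side with (a constant times) $\sum_k 2^k D(\mu_k) \log\log\log_+(1/\mu_k)$ up to easily-controlled boundary terms. To encode this, group the indices into \emph{double-exponential blocks} $\mathcal K_j := \{k : 2^{-2^{j+1}} < \mu_k \le 2^{-2^j}\}$ — each a consecutive interval since $\mu_k$ is monotone — on which $\log\log\log_+(1/\mu_k) \asymp \log j$; so one needs $\log(1/b_k) \asymp \log j(k)$ while retaining $\sum b_k \le 1$. A natural construction is the mass-weighted partition
\[
b_k = \frac{1}{Z}\,\frac{2^k D(\mu_k)}{j(k)^2}, \qquad Z := \sum_\ell \frac{2^\ell D(\mu_\ell)}{j(\ell)^2},
\]
which makes $\sum_k b_k = 1$ and expands $\log(1/b_k)$ as $\log Z + 2\log j(k) + \log\bigl(1/(2^k D(\mu_k))\bigr)$: the middle term is the desired triple-log, the first is an overall constant (once $Z$ is seen to be dominated by $\|f\|_1$ via the hypothesis $D(s)\gtrsim s$), and the third is an \emph{entropy} remainder.

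The main obstacle I foresee is controlling this entropy remainder and showing it is dominated by the target norm; this is exactly where both hypotheses on $D$ must be used essentially. The hypothesis $D(s) \gtrsim s$ gives the crude lower bound $2^k D(\mu_k) \gtrsim 2^k \mu_k$, and combined with $\sum_k 2^k \mu_k \lesssim \|f\|_1$ puts the entropy sum into a form amenable to block-by-block analysis. The hypothesis $D(s^2) \lesssim s D(s)$, iterated within each block $\mathcal K_j$, yields the controlled decay of $D$ across successive dyadic scales that is needed to absorb the factors $\log(1/(2^k D(\mu_k)))$ into the main $\log j$ weights. Making this block-by-block bookkeeping close cleanly — so that the residual logarithmic factors do not compound over the infinitely many blocks, and in particular so that the variable cardinality $|\mathcal K_j|$ is absorbed into the weights rather than inflating them — is the delicate technical heart of the argument; if necessary, one may need to refine the partition of unity by a further subdivision inside each block to keep the residuals tight.
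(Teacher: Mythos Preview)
The paper does not prove this statement at all: it is quoted as Theorem~2.2(3) of Carro--Mart\'in \cite{MR2076775} and applied as a black box in the derivation of \eqref{e.LloglogL-weak}. There is therefore no proof in the paper to compare your attempt against; if you want the original argument you must consult \cite{MR2076775} directly.

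On the proposal itself: the architecture (dyadic layer-cake atoms, double-exponential blocking, tailored partition of unity) is the right shape for this kind of extrapolation result, and your Abel-summation identification of the target with $\sum_k 2^k D(\mu_k)\log\log\log_+(1/\mu_k)$ is correct. The genuine soft spot is the entropy remainder $\sum_k 2^k D(\mu_k)\log\bigl(1/(2^k D(\mu_k))\bigr)$ generated by your mass-weighted choice of $b_k$. You assert that iterating $D(s^2)\lesssim sD(s)$ ``within each block $\mathcal K_j$'' controls it, but squaring $s$ moves you from block $j$ to block $j+1$, not within a block; the hypothesis governs inter-block decay, and it is not clear how it bounds $\log(1/(2^k D(\mu_k)))$, which can be of order $2^{j(k)}$, by the target weight $\log j(k)$. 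Your own closing sentence concedes this is unresolved and may require redesigning the partition. A safer route, closer to what one finds in the extrapolation literature, is to choose $b_k$ depending only on the block index and on the position of $k$ within its block (so that $\log(1/b_k)\lesssim \log j(k)+\log|\mathcal K_{j(k)}|$), and then use $D(s^2)\lesssim sD(s)$ together with concavity to show that the contribution of block $j$ to $\sum_k 2^k D(\mu_k)$ already absorbs the factor $\log|\mathcal K_j|$; this avoids the entropy term entirely.
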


\subsection{Proof of the weak type estimate (\ref{e.LloglogL-weak})}

In our case, we'll have \eqref{e.factorization-Carro-Martin} with
$$D(s) = s \log \log_+ \bigl(\frac 1 s\bigr)\,, \qquad  R(t) = \frac 1 t \ \ .$$
To see this, note that this follows from \eqref{e.distribution-L1} if $t\le 1$. When $t>1$, using the trivial bound
$$|\{\mathscr Cf(x)>s\}| \le 1$$ 
which holds for any $s$ (since $\mathscr Cf$ is supported in $[0,1]$), we obtain $(\mathscr Cf)^{\ast}(t) = 0$, and the factorization estimate \eqref{e.factorization-Carro-Martin} follows immediately. We note that technically the above function $D(s)$ is not concave for some range of $s$ near $1$, so what really happens is we use a concave approximation of $D$ that is comparable (up to the first derivative) to $D$  near these values of $s$. We will abuse notation and use $D$ in the sequel without any further comment.

Now, the extrapolation method of Carro-Mart\'in will give the following estimate:
\begin{align}\label{l.post-Carro-Martin}
\|\mathscr C f\|_{1,\infty} &\lesssim \int_0^1 f^{\ast}(t) (\log\log\log_+ \frac 1 t) D'(t) \; dt
\\ &\lesssim \int_0^1 f^{\ast}(t) \bigl(\log\log_+ \frac 1 t\bigr)\bigl(\log\log\log_+ \frac 1 t\bigr)  \; dt
\\ \nonumber & \simeq  \|f\|_{L(\log\log L) (\log \log \log L)} \ \ ,
\end{align}
note that the last equivalence is a known equivalent way to express the Orlicz norm. As it is classical that we have $S_{n_j}f\to f$ a.e. for bounded functions, which are dense in $L (\log\log L) (\log\log \log L)$, this proves the a.e. convergence for all $f$ in this space.

\section{Concluding Remarks} \label{s.final}

For a lacunary sequence of integers $ \{n_j\}$, there is a direct way to see that $ S _{n_j} f$ converges to $ f$ a.e. for $ f\in L (\log L) ^{1/2} $. We indicate this here.  Letting $ V _{n}f$ denote the de la Vall\'ee Poussin sums, we of course have $ V _{n}f$ converging a.e. to $ f$. 
And, one can see that the inequality below 
\begin{equation*}
\Bigl\|  \Bigl[\sum_{j} |  V_{n_j} f - S_{n_j}f | ^2   \Bigr] ^{1/2}  \Bigr\|_{1, \infty } \lesssim 
\| f\|_{L (\log L) ^{1/2} }
\end{equation*}
is a corollary to the endpoint Marcinkiewicz multiplier theorem of Tao-Wright \cite{MR1900894}.  This paper has interesting variants of the Zygmund inequality.  

If we consider the full sequence of partial Walsh-Fourier sums, we have no better estimate than Hausdorff-Young to use in the multi-frequency argument.  We have not seen an estimate that would improve our knowledge of the convergence of the full sequence of Walsh-Fourier sums.  
 Indeed, if we consider any sequence that grows more slowly than lacunary, it would seem that only the Hausdorff-Young inequality is available in the multi-frequency argument.  
  
Konyagin has  showed that for special sequences of indices the corresponding partial sums of the Walsh-Paley series converge almost everywhere to $ f $ for \emph{any}  $ f\in L^1(\mathbb T) $ \cite{MR1256607}. These are sequences of indices such that if we write each index in binary form then there there is an uniform bound on the number of times the digits alternate between $ 0$ and $ 1$. In particular, the sequence of powers of $ 2 $ falls into this category, although it is not hard to construct a lacunary sequence of integers without this property. He has 
  posed the question of characterizing those sequence of integers $ \{n_j\}$ for which the Walsh-Fourier series $ S _{n_j} f$ converge pointwise to $ f$ for all integrable $ f$, see \cite{MR2275651}*{Problem 3.3}.  There are more points of interest in this paper; the interested reader is encouraged to read it.


\begin{bibdiv}
\begin{biblist}
\bib{MR1407066}{article}{
  author={Antonov, N. Yu.},
  title={Convergence of Fourier series},
  booktitle={Proceedings of the XX Workshop on Function Theory (Moscow, 1995)},
  journal={East J. Approx.},
  volume={2},
  date={1996},
  number={2},
  pages={187--196},
  issn={1310-6236},
  review={\MR {1407066 (97h:42005)}},
}

\bib{MR1875141}{article}{
  author={Arias-de-Reyna, J.},
  title={Pointwise convergence of Fourier series},
  journal={J. London Math. Soc. (2)},
  volume={65},
  date={2002},
  number={1},
  pages={139--153},
  issn={0024-6107},
  review={\MR {1875141 (2002k:42009)}},
  doi={10.1112/S0024610701002824},
}

\bib{MR0199631}{article}{
  author={Carleson, Lennart},
  title={On convergence and growth of partial sums of Fourier series},
  journal={Acta Math.},
  volume={116},
  date={1966},
  pages={135--157},
  issn={0001-5962},
  review={\MR {0199631 (33 \#7774)}},
}

\bib{MR2076775}{article}{
  author={Carro, Mar{\'{\i }}a J.},
  author={Mart{\'{\i }}n, Joaquim},
  title={Endpoint estimates from restricted rearrangement inequalities},
  journal={Rev. Mat. Iberoamericana},
  volume={20},
  date={2004},
  number={1},
  pages={131--150},
  issn={0213-2230},
  review={\MR {2076775 (2005d:46153)}},
}

\bib{MR800004}{article}{
  author={Chang, S.-Y. A.},
  author={Wilson, J. M.},
  author={Wolff, T. H.},
  title={Some weighted norm inequalities concerning the Schr\"odinger operators},
  journal={Comment. Math. Helv.},
  volume={60},
  date={1985},
  number={2},
  pages={217--246},
  issn={0010-2571},
  review={\MR {800004 (87d:42027)}},
  doi={10.1007/BF02567411},
}

\bib{MR0340926}{article}{
  author={Fefferman, Charles},
  title={Pointwise convergence of Fourier series},
  journal={Ann. of Math. (2)},
  volume={98},
  date={1973},
  pages={551--571},
  issn={0003-486X},
  review={\MR {0340926 (49 \#5676)}},
}

\bib{MR0096068}{article}{
  author={Gosselin, Richard P.},
  title={On the divergence of Fourier series},
  journal={Proc. Amer. Math. Soc.},
  volume={9},
  date={1958},
  pages={278--282},
  issn={0002-9939},
  review={\MR {0096068 (20 \#2565)}},
}

\bib{MR0238019}{article}{
  author={Hunt, Richard A.},
  title={On the convergence of Fourier series},
  conference={ title={Orthogonal Expansions and their Continuous Analogues (Proc. Conf., Edwardsville, Ill., 1967)}, },
  book={ publisher={Southern Illinois Univ. Press}, place={Carbondale, Ill.}, },
  date={1968},
  pages={235--255},
  review={\MR {0238019 (38 \#6296)}},
}

\bib{kolmogorov}{article}{
  author={Kolmogorov, A},
  title={Une s\'erie de Fourier–Lebesgue divergente presque partout},
  journal={Fund. Math.},
  volume={4},
  date={1923},
  pages={324-328},
}

\bib{MR1256607}{article}{
  author={Konyagin, S. V.},
  title={On a subsequence of Fourier-Walsh partial sums},
  language={Russian, with Russian summary},
  journal={Mat. Zametki},
  volume={54},
  date={1993},
  number={4},
  pages={69--75, 158},
  issn={0025-567X},
  translation={ journal={Math. Notes}, volume={54}, date={1993}, number={3-4}, pages={1026--1030 (1994)}, issn={0001-4346}, },
  review={\MR {1256607 (95e:42030)}},
  doi={10.1007/BF01210421},
}

\bib{MR2200228}{article}{
  author={Konyagin, S. V.},
  title={Divergence everywhere of subsequences of partial sums of trigonometric Fourier series},
  journal={Proc. Steklov Inst. Math.},
  date={2005},
  number={Function Theory, suppl. 2},
  pages={S167--S175},
  issn={0081-5438},
  review={\MR {2200228 (2006j:42007)}},
}

\bib{MR2275651}{article}{
  author={Konyagin, Sergey V.},
  title={Almost everywhere convergence and divergence of Fourier series},
  conference={ title={International Congress of Mathematicians. Vol. II}, },
  book={ publisher={Eur. Math. Soc., Z\"urich}, },
  date={2006},
  pages={1393--1403},
  review={\MR {2275651 (2008b:42006)}},
}

\bib{MR652607}{article}{
  author={K{\"o}rner, T. W.},
  title={Everywhere divergent Fourier series},
  journal={Colloq. Math.},
  volume={45},
  date={1981},
  number={1},
  pages={103--118 (1982)},
  issn={0010-1354},
  review={\MR {652607 (83h:42010)}},
}

\bib{MR1783613}{article}{
  author={Lacey, Michael T.},
  author={Thiele, Christoph},
  title={A proof of boundedness of the Carleson operator},
  journal={Math. Res. Lett.},
  volume={7},
  date={2000},
  number={4},
  pages={361\ndash 370},
  issn={1073-2780},
}

\bib{MR1952931}{article}{
  author={Muscalu, C.},
  author={Tao, T.},
  author={Thiele, C.},
  title={A discrete model for the bi-Carleson operator},
  journal={Geom. Funct. Anal.},
  volume={12},
  date={2002},
  number={6},
  pages={1324--1364},
  issn={1016-443X},
  review={\MR {1952931 (2004b:42043)}},
  doi={10.1007/s00039-002-1324-0},
}

\bib{MR2653686}{article}{
  author={Nazarov, Fedor},
  author={Oberlin, Richard},
  author={Thiele, Christoph},
  title={A Calder\'on-Zygmund decomposition for multiple frequencies and an application to an extension of a lemma of Bourgain},
  journal={Math. Res. Lett.},
  volume={17},
  date={2010},
  number={3},
  pages={529--545},
  issn={1073-2780},
  review={\MR {2653686}},
}

\bib{2010arXiv1004.4019O}{article}{
  author={Oberlin, Richard},
  author={Thiele, Christoph},
  title={New uniform bounds for a Walsh model of the bilinear Hilbert transform},
  journal={ArXiv e-prints},
  journal={IUMJ},
  year={2010},
  pages={to appear},
}

\bib{MR1052010}{article}{
  author={Sagher, Yoram},
  author={Zhou, Ke Cheng},
  title={Local norm inequalities for lacunary series},
  journal={Indiana Univ. Math. J.},
  volume={39},
  date={1990},
  number={1},
  pages={45--60},
  issn={0022-2518},
  review={\MR {1052010 (91b:42017)}},
  doi={10.1512/iumj.1990.39.39005},
}

\bib{MR0336222}{article}{
  author={Sj{\"o}lin, Per},
  title={Convergence almost everywhere of certain singular integrals and multiple Fourier series},
  journal={Ark. Mat.},
  volume={9},
  date={1971},
  pages={65--90},
  issn={0004-2080},
  review={\MR {0336222 (49 \#998)}},
}

\bib{MR2014553}{article}{
  author={Sj{\"o}lin, Per},
  author={Soria, Fernando},
  title={Remarks on a theorem by N. Yu.\ Antonov},
  journal={Studia Math.},
  volume={158},
  date={2003},
  number={1},
  pages={79--97},
  issn={0039-3223},
  review={\MR {2014553 (2004i:42006)}},
  doi={10.4064/sm158-1-7},
}

\bib{MR1900894}{article}{
  author={Tao, Terence},
  author={Wright, James},
  title={Endpoint multiplier theorems of Marcinkiewicz type},
  journal={Rev. Mat. Iberoamericana},
  volume={17},
  date={2001},
  number={3},
  pages={521--558},
  issn={0213-2230},
  review={\MR {1900894 (2003e:42014)}},
}

\bib{MR678901}{article}{
  author={Totik, V.},
  title={On the divergence of Fourier series},
  journal={Publ. Math. Debrecen},
  volume={29},
  date={1982},
  number={3-4},
  pages={251--264},
  issn={0033-3883},
  review={\MR {678901 (84c:42008)}},
}

\bib{MR1963498}{book}{
  author={Zygmund, A.},
  title={Trigonometric series. Vol. I, II},
  series={Cambridge Mathematical Library},
  edition={3},
  note={With a foreword by Robert A. Fefferman},
  publisher={Cambridge University Press},
  place={Cambridge},
  date={2002},
  pages={xii; Vol. I: xiv+383 pp.; Vol. II: viii+364},
  isbn={0-521-89053-5},
  review={\MR {1963498 (2004h:01041)}},
}

\end{biblist}
\end{bibdiv}

\end{document}